\crefname{equation}{}{}
\crefname{lem}{Lemma}{Lemmas}
\crefname{thm}{Theorem}{Theorems}
\DeclareMathOperator*{\supp}{supp}
\newcommand{\dual}[1]{\left\langle #1 \right\rangle}
\newcommand{\D}{\mathrm{D}}
\newcommand{\E}{\mathrm{e}}
\newcommand{\nm}[1]{\left\Vert #1 \right\Vert}
\newcommand{\snm}[1]{\left\vert #1 \right\vert}
\newcommand{\snmii}[1]
{
  \left\vert\kern-0.25ex
  \left\vert\kern-0.25ex
  \left\vert
  #1
  \right\vert\kern-0.25ex
  \right\vert\kern-0.25ex
  \right\vert
}
\newtheorem{Def}{Definition}[section]
\newtheorem{lem}{Lemma}[section]
\newtheorem{rem}{Remark}[section]
\newtheorem{thm}{Theorem}[section]
\numberwithin{equation}{section}
\begin{document}
\title
{
  \Large\bf Regularity of solutions to time fractional diffusion equations
   \thanks
  {
    This work was supported by  Major Research Plan of National
    Natural Science Foundation of China (91430105).
  }
}
\author{
  Binjie Li \thanks{Email: libinjie@scu.edu.cn},
  Xiaoping Xie \thanks{Corresponding author. Email: xpxie@scu.edu.cn} \\
  {School of Mathematics, Sichuan University, Chengdu 610064, China}
}
\date{}
\maketitle

\begin{abstract}
  We derive some regularity estimates of the solution to a time fractional
  diffusion equation, that are useful for numerical analysis, and partially
  unravel the singularity structure of the solution with respect to the time
  variable. \vskip 0.2cm\noindent {\bf Keywords:} time fractional diffusion,
  regularity.
\end{abstract}

\section{Introduction}
This paper considers the following time fractional diffusion problem:
\begin{equation}
  \label{eq:model}
  \left\{
    \begin{array}{rl}
      \partial_t^\alpha (u-u_0) - \Delta u = f \phantom{0}
      & \text{ in $ \Omega \times (0,T) $, } \\
      u = 0 \phantom{0} & \text{ on $ \partial\Omega \times [0,T] $, } \\
      u = u_0 & \text{ on $ \Omega \times \{ 0 \} $, }\\
    \end{array}
  \right.
\end{equation}
where $ \Omega \subset \mathbb R^d $ is a bounded domain with $ C^2 $ boundary,
$ 0.5 < \alpha < 1 $, $ u_0 \in H_0^1(\Omega) $, and $ f \in L^2(Q_T) $ with $
Q_T := \Omega \times (0,T) $. Above $ \partial_t^\alpha: L^1(Q_T) \to \mathcal
D'(Q_T) $, the Riemann-Liouville fractional differential operator, is defined by
$ \partial_t^\alpha := \partial_t I_{0+}^{1-\alpha} $, where $ \partial_t $
denotes the generalized differential operator with respect to the time variable
$ t $, and $ I_{0+}^{1-\alpha}: L^1(Q_T) \to L^1(Q_T) $ is given by
\[
  (I_{0+}^{1-\alpha} v)(x,t) := \frac1{\Gamma(1-\alpha)}
  \int_0^t (t-s)^{-\alpha} v(x,s) \, \mathrm{d}s,
  \quad (x,t) \in Q_T,
\]
for all $ v \in L^1(Q_T) $, with $ \Gamma(\cdot) $ denoting the standard Gamma
function.


The main goal of this paper is to investigate the regularity properties of the
solution to problem \cref{eq:model}. Let us first introduce some recent related
works. In \cite{Eidelman;2004} Eidelman and Kochubei constructed and
investigated fundamental solutions for Cauchy time-fractional diffusion
problems. For the generalized time-fractional diffusion equation, Luchko
\cite{Luchko;2009, Luchko;2010} established a maximum principle, and used this
maximum principle to prove the uniqueness of the generalized solution; moreover,
Luchko discussed the existence of the generalized solution of problem
\cref{eq:model} with $ f = 0 $. Furthermore, Luchko \cite{Luchko;2012} extended
the maximum principle to the fractional diffusion equation, and discussed the
properties of the solution to a one-dimensional time-fractional diffusion
equation. Under the basic condition that $ f = 0 $ or $ u_0 = 0 $, Sakamoto and
Yamamoto \cite{Sakamoto;2011} discussed the uniqueness and regularity of the
weak solution to problem \cref{eq:model} for $ 0 < \alpha < 1 $. Zacher
\cite{Zacher;2013} proposed a De Giorgi-Nash theorem for time fractional
diffusion equations. Assuming the force function $ f $ to be weighted H\"older
continuous, Mu et al.\ \cite{Mu;2016} proved the unique existence of solutions
to three types of time-fractional diffusion equations, and derived some new
regularity estimates. For the abstract time-fractional evolution equations,
Zhang and Liu \cite{Zhang;2012} established sufficient conditions for the
existence of mild solutions for fractional evolution differential equations by
using a new fixed point theorem. Wang et al.\ \cite{Wang;2012} obtained the
existence and uniqueness of mild solutions and classical solutions to abstract
linear and semilinear fractional Cauchy problems with almost sectorial
operators. For more works, we refer the reader to
\cite{Mainardi;1994,Mainardi;1995,Mainardi;1996,El-sayed;1996,Agarawal;2002,
Gejji;2006,Fan;2014} and the references therein.

The motivation of paper is to provide regularity estimates for numerical
analysis, and, to the best of our knowledge, for problem \cref{eq:model} there
is no available regularity result for numerical analysis so far. Moreover,
because of the nonlocal property of the operator $ \partial_t^\alpha $, both the
storage and computing costs for a numerical approximate of a time fractional
diffusion problem are much more expensive than that of a corresponding standard
diffusion problem. Therefore, designing high accuracy algorithms for time
fractional diffusion problems is of great practical value
\cite{Li;Xu;2009,Zheng;2015}. But whether a high accuracy algorithm works or not
depends mainly on the regularity of the solution with respect to the time
variable $ t $. Unfortunately, it is well known that even $ u_0 $ and $ f $ are
regular enough, the solution to \cref{eq:model} has singularity in time. So it is
natural to investigate the singularity structure of the solution in time, which
not only is of theoretical value, but also can provide insight into developing
efficient numerical algorithms.


In this paper, we employ the Galerkin method to investigate the regularity
properties of the weak solution to problem \cref{eq:model}. Compared to the work
aforementioned, our regularity estimates are more applicable to numerical
analysis. Furthermore, our regularity estimates demonstrate that, by subtracting
some particular forms of singularity functions, we can improve the regularity of
the solution with respect to the time variable $ t $, which partially unravel
the singularity structure of the solution in time.


The rest of this paper is organized as follows. In \cref{sec:notation} we
introduce some Sobolev spaces, the fractional integration and derivative
operators, and some fundamental properties of these operators. In \cref{sec:ode}
we investigate the regularity properties of the solution of a fractional
ordinary equation. Finally, in \cref{sec:main} we use the results developed in
the previous sections to discuss the regularity of the solution to problem
\cref{eq:model}.

\section{Preliminaries}
\label{sec:notation}
We start by introducing some Sobolev spaces. Let $ 0 \leqslant \beta < \infty $.
Define \cite{Tartar;2007}
\[
  H^\beta(\mathbb R) := \left\{
    v \in L^2(\mathbb R) \middle| \
    \left( 1+\snm{\cdot}^2 \right) ^{\beta/2}
    \mathcal F v(\cdot) \in L^2(\mathbb R)
  \right\},
\]
and endow this space with the following norm:
\begin{align*}
  \nm{v}_{ H^\beta(\mathbb R) } :=
  \nm{v}_{ L^2(\mathbb R) } +
  \snm{v}_{ H^\beta(\mathbb R) } \quad
  \text{ for all $ v \in H^\beta(\mathbb R) $, }
\end{align*}
where
\[
  \snm{v}_{H^\beta(\mathbb R)} :=
  \left(
    \int_{\mathbb R} \snm{\xi}^\beta
    \snm{ \mathcal Fv(\xi) }^2
    \, \mathrm{d}\xi
  \right)^\frac12.
\]
Above and throughout, $ \mathcal F: \mathcal S'(\mathbb R) \to \mathcal
S'(\mathbb R) $ denotes the well-known Fourier transform operator, where $
\mathcal S'(\mathbb R) $, the dual space of $ \mathcal S(\mathbb R) $, is the
space of tempered distributions. For $ -\infty \leqslant a < b \leqslant \infty
$, define
\[
  H^\beta(a,b) := \left\{
    v|_{(a,b)} \middle| \
    v \in H^\beta(\mathbb R)
  \right\},
\]
and equip this space with the following norm:
\[
  \nm{v}_{H^\beta(a,b)} :=
  \inf_{
    \substack{
      \widetilde v \in H^\beta(\mathbb R) \\
      \widetilde v |_{(a,b)} = v
    }
  } \nm{ \widetilde v }_{ H^\beta(\mathbb R) }
  \quad \text{ for all $ v \in H^\beta(a,b) $. }
\]
In addition, we use $ H_0^\beta(a,b) $ to denote the closure of $ \mathcal
D(a,b) $ in $ H^\beta(a,b) $, where $ \mathcal D(a,b) $ denotes the set of $
C^\infty $ functions with compact support in $ (a, b) $.
\begin{rem}
  It is well known that if $ 0 < \beta < 0.5 $, then $ H^\beta(a,b) $ coincides
  with $ H_0^\beta(a,b) $. Also, there exists another equivalent definition of
  the space $ H^\beta(0,1) $ for $ 0 < \beta < 1 $:
  \[
    H^\beta(0,T) := \left\{
      v \in L^2(0,T) \middle| \
      \int_0^1 \int_0^1
      \frac{
        \snm{v(s) - v(t)}^2
      }{
        \snm{s-t}^{1+2\beta}
      }
      \, \mathrm{d}s \, \mathrm{d}t < \infty
    \right\}.
  \]
  By this definition, a routine computation yields that if $ 0 < \beta < 0.25 $,
  then $ v \in H^\beta(0,1) $ with $ v $ being given by
  \[
    v(t) := t^{-\beta}, \quad 0 < t < 1.
  \]
  This result will be used implicitly in the proof of \cref{thm:ode-2}.
\end{rem}

Let $ X $ be a separable Hilbert space with inner product $ (\cdot,\cdot)_X $,
and an orthonormal basis $ \{ e_k | \ k \in \mathbb N \} $. For $ -\infty < a <
b < \infty $ and $ 0 \leqslant \beta < \infty $, define
\[
  H^\beta(a,b; X) := \left\{
    v:\ (a,b) \to X \middle| \
    \sum_{k=0}^\infty \nm{(v,e_k)_X}_{H^\beta(a,b)}^2
    < \infty
  \right\},
\]
and equip this space with the following norm:
\[
  \nm{v}_{H^\beta(0,T; X)} := \left(
    \sum_{k=0}^\infty \nm{(v,e_k)_X}_{H^\beta(0,T)}^2
  \right)^\frac12 \quad
  \text{ for all $ v \in H^\beta(a,b; X) $. }
\]
It is easy to verify that $ H^\beta(a,b; X) $ is a Banach space, and, in
particular, we shall also use $ L^2(a, b; X) $ to denote the space $ H^0(a,b; X)
$.
\begin{rem}
  It is evident that the spaces $ L^2(a,b; X) $ and $ H^1(a,b; X) $ defined
  above coincide respectively with the corresponding standard $ X $-valued
  Sobolev spaces \cite{Evans}, with the same norms. Using the $ K $-method
  \cite{Tartar;2007}, we see that, for $ 0 < \beta < 1 $, the space $
  H^\beta(a,b; X) $ coincides with the following interpolation space
  \[
    \big( L^2(a,b; X),\ H^1(a,b; X) \big)_{\beta,2},
  \]
  with equivalent norms. Thus, the space $ H^\beta(a,b; X) $, $ 0 \leqslant
  \beta \leqslant 1 $, is independent of the choice of orthonormal basis $ \{
  e_k |\ k \in \mathbb N \} $ of $ X $. The case of $ \beta > 1 $ is analogous.
\end{rem}

Then, let us introduce the Riemann-Liouville fractional integration and
derivative operators as follows \cite{Samko;1993,Podlubny;1999}.
\begin{Def}
  Let $ \beta > 0 $. Define $ I_{+}^\beta: V_{+} \to V_{+} $ by
  \[
    I_{+}^\beta v(x) := \frac1{ \Gamma(\beta) }
    \int_{-\infty}^x (x-t)^{\beta-1} v(t) \, \mathrm{d}t,
    \quad -\infty < x < \infty,
  \]
  for all $ v \in V_{+} $, and define $ I_{-}^\beta: V_{-} \to V_{-} $ by
  \[
    I_{-}^\beta v(x) := \frac1{ \Gamma(\beta) }
    \int_x^\infty (t-x)^{\beta-1} v(t) \, \mathrm{d}t,
    \quad -\infty < x < \infty,
  \]
  for all $ v \in V_{-} $. Here $ \Gamma(\cdot) $ denotes the standard Gamma
  function, and
  \begin{align*}
    V_{+} &:= \left\{
      v \in L_{\mathrm{loc}}^1(\mathbb R) \middle| \
      \supp v \subset [a, \infty)
      \text{ for some } a \in \mathbb R
    \right\}, \\
    V_{-} &:= \left\{
      v \in L_{ \mathrm{loc} }^1(\mathbb R) \middle| \
      \supp v \subset (-\infty, a]
      \text{ for some } a \in \mathbb R
    \right\},
  \end{align*}
  with $ \supp v $ denoting the support of $ v $ and $
  L_{\mathrm{loc}}^1(\mathbb R) $ denoting the set of all locally integrable
  functions defined on $ \mathbb R $. In particular, for $ a \in \mathbb R $, we
  use $ I_{a+}^\beta $ to denote the restriction of $ I_{+}^\beta $ to
  \[
    \left\{
      v \in V_{+} \middle| \ \supp v \subset [a, \infty)
    \right\},
  \]
  and use $ I_{a-}^\beta $ to denote the restriction of $ I_{-}^\beta $ to
  \[
    \left\{
      v \in V_{-} \middle| \ \supp v \subset (-\infty, a]
    \right\}.
  \]
\end{Def}

\begin{Def}
  Let $ 0 < \beta < 1 $. Define $ \D_{+}^\beta: \ V_{+} \to \mathcal D'(\mathbb
  R) $ and $ \D_{-}^\beta: \ V_{-} \to \mathcal D'(\mathbb R) $, respectively,
  by
  \[
    \D_{+}^\beta := \D I_{+}^{1-\beta}, \quad
    \D_{-}^\beta := -\D I_{-}^{1-\beta},
  \]
  where $ \D: \mathcal D'(\mathbb R) \to \mathcal D'(\mathbb R) $ is the
  standard generalized differential operator. In particular, for $ a \in
  \mathbb R $, we use $ \D_{a+}^\beta $ to denote the restriction of $
  \D_{+}^\beta $ to
  \[
    \left\{
      v \in V_{+} \middle| \
      \supp v \subset [a, \infty)
    \right\},
  \]
  and use $ \D_{a-}^\beta $ to denote the restriction of $ \D_{-}^\beta $ to
  \[
    \left\{
      v \in V_{-} \middle| \
      \supp v \subset (-\infty, a]
    \right\}.
  \]
\end{Def}
\begin{rem}
  \label{rem:frac_deri}
  Let $ 0 < \beta < 1 $ and $ v \in V_{+} $. By the definition of $
  I_{+}^{1-\beta} $, we have $ I_{+}^{1-\beta} v = h*v $, where
  \[
    h(t) := \frac1{ \Gamma(1-\beta) }
    \begin{cases}
      t^{-\beta}, & \text{ if $ t > 0 $, } \\
      0, & \text{ otherwise. }
    \end{cases}
  \]
  If the support of $ v $ is compact, then \cite[Theorem 6, pp.\
  160-161]{Yosida;1980} implies $ \mathcal F(h*v) = \mathcal F h \mathcal F v $,
  so that, using the fact that
  \[
    \mathcal Fh(\xi) = (\mathrm{i}\xi)^{\beta-1}, \quad -\infty < \xi < \infty,
  \]
  which can be derived by a similar analysis as in \cref{sec:app}, we obtain
  \begin{align*}
    \mathcal F ( I_{+}^{1-\beta} v )(\xi) &=
    ( \mathrm{i}\xi )^{\beta-1} \mathcal F v(\xi), \\
    \mathcal F ( \D_{+}^\beta v )(\xi) &=
    ( \mathrm{i}\xi )^\beta \mathcal F v(\xi),
  \end{align*}
  for all $ -\infty < \xi < \infty $. This provides an approach to prove
  \cref{lem:F} below.
\end{rem}
\begin{rem}
  Note that these fractional integration and derivative operators introduced
  above act on functions defined on $ \mathbb R $. For the sake of rigorousness,
  we make the convention that, when applying one of these operators to a
  function $ v $ defined in some interval $ (a,b) $, we shall implicitly extend
  $ v $ to $ \mathbb R \setminus (a,b) $ by zero.
\end{rem}

In the remainder of this section, we present some fundamental properties of
these fractional integration and derivative operators.
\begin{lem}
  \label{lem:basic-1}
  If $ \beta, \gamma > 0 $, then
  \[
    I_{+}^{ \beta + \gamma } = I_{+}^\beta I_{+}^\gamma, \quad
    I_{-}^{ \beta + \gamma } = I_{-}^\beta I_{-}^\gamma.
  \]
\end{lem}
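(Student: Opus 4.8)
The plan is to establish the identity for $ I_{+}^\beta $ by a direct computation resting on Fubini's theorem together with the Beta integral, and then to obtain the statement for $ I_{-}^\beta $ by a reflection argument. Fix $ v \in V_{+} $ with $ \supp v \subset [a,\infty) $ and fix $ x \in \mathbb R $; we may assume $ x > a $, since otherwise both sides vanish. First I would observe that $ I_{+}^\gamma v $ again belongs to $ V_{+} $: writing $ I_{+}^\gamma v = h_\gamma * v $ with $ h_\gamma(t) := t^{\gamma-1}/\Gamma(\gamma) $ for $ t > 0 $ and $ h_\gamma(t) := 0 $ otherwise, the kernel $ h_\gamma $ is locally integrable, so its convolution with the locally integrable function $ v $ supported in $ [a,\infty) $ is well defined almost everywhere, locally integrable, and supported in $ [a,\infty) $. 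Hence the composition $ I_{+}^\beta I_{+}^\gamma v $ is meaningful.

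Next I would expand
\[
  \big( I_{+}^\beta I_{+}^\gamma v \big)(x)
  = \frac{1}{\Gamma(\beta)\Gamma(\gamma)}
  \int_a^x (x-t)^{\beta-1}
  \int_a^t (t-s)^{\gamma-1} v(s) \, \mathrm{d}s \, \mathrm{d}t,
\]
interchange the order of integration by Fubini's theorem, and arrive at
\[
  \big( I_{+}^\beta I_{+}^\gamma v \big)(x)
  = \frac{1}{\Gamma(\beta)\Gamma(\gamma)}
  \int_a^x v(s)
  \left( \int_s^x (x-t)^{\beta-1}(t-s)^{\gamma-1} \, \mathrm{d}t \right)
  \mathrm{d}s.
\]
The inner integral is evaluated through the substitution $ t = s + (x-s)\tau $, $ \tau \in (0,1) $, which recasts it as $ (x-s)^{\beta+\gamma-1}\Beta(\gamma,\beta) = (x-s)^{\beta+\gamma-1}\,\Gamma(\beta)\Gamma(\gamma)/\Gamma(\beta+\gamma) $; inserting this back yields precisely $ \big( I_{+}^{\beta+\gamma} v \big)(x) $.

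The step deserving genuine care is the application of Fubini's theorem, which amounts to checking that
\[
  \int_a^x (x-t)^{\beta-1}
  \int_a^t (t-s)^{\gamma-1} \snm{v(s)} \, \mathrm{d}s \, \mathrm{d}t
  < \infty.
\]
Since the integrand is nonnegative, Tonelli's theorem permits the same interchange and the same Beta computation with $ \snm{v} $ in place of $ v $, so the left-hand side equals $ \frac{1}{\Gamma(\beta+\gamma)}\int_a^x (x-s)^{\beta+\gamma-1}\snm{v(s)}\,\mathrm{d}s $. This is finite because $ \snm{v} \in L^1(a,x) $ while the weight $ (x-\cdot)^{\beta+\gamma-1} $ is bounded near $ s = a $ and integrable near $ s = x $ (this is where $ \beta+\gamma > 0 $ enters), which legitimizes all the manipulations above.

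Finally, for $ I_{-}^\beta $ I would introduce the reflection $ R \colon L_{\mathrm{loc}}^1(\mathbb R) \to L_{\mathrm{loc}}^1(\mathbb R) $, $ (Rv)(x) := v(-x) $, which restricts to a bijection of $ V_{-} $ onto $ V_{+} $ with $ R^2 = \mathrm{id} $, and verify the conjugation formula $ I_{-}^\beta = R\, I_{+}^\beta\, R $ by a direct change of variables in the defining integral. Then
\[
  I_{-}^{\beta+\gamma}
  = R\, I_{+}^{\beta+\gamma}\, R
  = R\, I_{+}^\beta I_{+}^\gamma\, R
  = \big( R\, I_{+}^\beta\, R \big)\big( R\, I_{+}^\gamma\, R \big)
  = I_{-}^\beta I_{-}^\gamma,
\]
using the identity for $ I_{+} $ already obtained. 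Beyond the Fubini verification I do not anticipate any further obstacle.
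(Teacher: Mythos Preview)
Your argument is correct and is in fact the classical proof of the semigroup property (Fubini/Tonelli plus the Beta integral, then reflection for the right-sided operator). The paper itself does not give a proof of this lemma at all: immediately after stating \cref{lem:basic-1,lem:basic-2} it writes ``For the proofs of \cref{lem:basic-1,lem:basic-2}, we refer the reader to \cite{Samko;1993}'', so there is nothing to compare beyond noting that your direct computation is exactly the argument one finds in that reference. One small remark: your finiteness claim for the Tonelli step should be understood for almost every $x$ rather than every fixed $x$, since for general $v \in L_{\mathrm{loc}}^1$ the integral $\int_a^x (x-s)^{\beta+\gamma-1}\snm{v(s)}\,\mathrm{d}s$ need only converge a.e.; this is enough, as the identity $I_{+}^{\beta+\gamma} = I_{+}^\beta I_{+}^\gamma$ is an equality in $L_{\mathrm{loc}}^1$.
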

\begin{lem}
  \label{lem:basic-2}
  Let $ -\infty < a < b < \infty $ and $ \beta > 0 $. If $ u $, $ v \in L^2(a,b)
  $, then
  \[
    \left( I_{a+}^\beta u, v \right)_{L^2(a,b)} =
    \left( u, I_{b-}^\beta v \right)_{L^2(a,b)}.
  \]
  If $ v \in L^p(a,b) $ with $ 1 \leqslant p \leqslant \infty $, then
  \begin{align*}
    \nm{I_{a+}^\beta v}_{L^p(a,b)} &\leqslant C \nm{v}_{L^p(a,b)}, \\
    \nm{I_{b-}^\beta v}_{L^p(a,b)} &\leqslant C \nm{v}_{L^p(a,b)},
  \end{align*}
  where $ C $ is a positive constant that only depends on $ a $, $ b $,
  $ \beta $ and $ p $.
\end{lem}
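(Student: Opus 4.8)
The plan is to establish the $L^p$-boundedness first and then deduce the adjoint identity from it via Fubini's theorem. For the boundedness, write $ L := b-a $ and let $ \bar v $ denote the extension of $ v $ to $ \mathbb R $ by zero, so that $ \nm{\bar v}_{L^p(\mathbb R)} = \nm{v}_{L^p(a,b)} $. For $ t \in (a,b) $ the integral defining $ (I_{a+}^\beta v)(t) $ runs over $ s \in (a,t) $, hence only values $ t-s \in (0,L) $ of the Riemann--Liouville kernel occur; accordingly, putting
\[
  k(r) := \frac1{\Gamma(\beta)}\, r^{\beta-1}\, \mathbf 1_{(0,L)}(r),
  \quad r \in \mathbb R,
\]
one checks directly that $ (I_{a+}^\beta v)(t) = (k*\bar v)(t) $ for a.e.\ $ t \in (a,b) $. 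Since $ k \in L^1(\mathbb R) $ with $ \nm{k}_{L^1(\mathbb R)} = L^\beta/\Gamma(\beta+1) $, Young's convolution inequality gives $ \nm{k*\bar v}_{L^p(\mathbb R)} \leqslant \nm{k}_{L^1(\mathbb R)}\nm{\bar v}_{L^p(\mathbb R)} $ for every $ 1 \leqslant p \leqslant \infty $, and restricting to $ (a,b) $ yields the claimed bound with $ C = (b-a)^\beta/\Gamma(\beta+1) $. The estimate for $ I_{b-}^\beta $ follows identically, or simply by the change of variables $ t \mapsto a+b-t $, which interchanges $ I_{a+}^\beta $ and $ I_{b-}^\beta $.

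For the adjoint identity, let $ u, v \in L^2(a,b) $. I would first verify absolute convergence of the double integral: by Tonelli's theorem together with the $ L^2 $ bound just proved (applied to $ \snm u $) and the Cauchy--Schwarz inequality,
\[
  \frac1{\Gamma(\beta)} \int_a^b \int_a^t (t-s)^{\beta-1}
  \snm{u(s)}\,\snm{v(t)} \, \mathrm{d}s \, \mathrm{d}t
  = \int_a^b \big( I_{a+}^\beta \snm u \big)(t)\, \snm{v(t)} \, \mathrm{d}t
  \leqslant C \nm{u}_{L^2(a,b)} \nm{v}_{L^2(a,b)} < \infty .
\]
Hence Fubini's theorem applies to the signed integrand; interchanging the order of integration over the triangle $ \{ (s,t) : a < s < t < b \} $ transforms $ \int_a^b (I_{a+}^\beta u)(t)\, v(t) \, \mathrm{d}t $ into $ \int_a^b u(s)\, \big( I_{b-}^\beta v \big)(s) \, \mathrm{d}s $, which is precisely the assertion.

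The arguments here are entirely routine, and I do not anticipate a genuine obstacle; the only point needing a little care is that the Riemann--Liouville kernel $ r^{\beta-1} $ fails to be integrable on all of $ \mathbb R $ (for $ \beta \leqslant 1 $ it is even singular at the origin), so one cannot convolve against it globally. Truncating the kernel to $ (0,b-a) $ — harmless because all the relevant arguments $ t-s $ lie in that interval — is what makes both Young's inequality and the Fubini justification legitimate.
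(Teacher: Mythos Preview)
Your argument is correct. The paper does not actually prove this lemma but simply refers to \cite{Samko;1993}; your proof via the truncated kernel and Young's inequality for the $L^p$ bounds, followed by Tonelli--Fubini for the duality relation, is precisely the standard route taken there.
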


\begin{lem}
  \label{lem:F}
  Let $ 0 < \beta < 1 $ and $ v \in L^1(\mathbb R) $ with compact support. Then
  \begin{align*}
    ( \mathcal F I_{+}^\beta v )(\xi) &=
    ( \mathrm{i}\xi )^{-\beta} \mathcal Fv(\xi), \\
    ( \mathcal F \D_{+}^\beta v )(\xi) &=
    ( \mathrm{i}\xi )^\beta \mathcal F v(\xi), \\
    ( \mathcal F I_{-}^\beta v )(\xi) &=
    ( -\mathrm{i}\xi )^{-\beta} \mathcal Fv(\xi), \\
    ( \mathcal F \D_{-}^\beta v )(\xi) &=
    ( -\mathrm{i}\xi )^\beta \mathcal F v(\xi),
  \end{align*}
  for all $ \xi \in \mathbb R $.
\end{lem}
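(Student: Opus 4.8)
The plan is to reduce all four identities to computing the Fourier transforms, as tempered distributions, of the kernels that define the fractional integrals. First I would note that, directly from the definition of $I_{+}^\beta$, one has $I_{+}^\beta v = k_\beta * v$, where $k_\beta(t) := \Gamma(\beta)^{-1} t^{\beta-1}$ for $t > 0$ and $k_\beta(t) := 0$ otherwise; likewise $I_{-}^\beta v = k_\beta(-\cdot) * v$. Since $0 < \beta < 1$, the function $k_\beta$ is locally integrable and tends to $0$ at infinity, hence $k_\beta \in \mathcal S'(\mathbb R)$, and the same holds for $k_\beta * v$ because $v$ has compact support. Thus the first and third identities amount to showing that $\mathcal F k_\beta(\xi) = (\mathrm i\xi)^{-\beta}$ and $\mathcal F\big(k_\beta(-\cdot)\big)(\xi) = (-\mathrm i\xi)^{-\beta}$.

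To compute $\mathcal F k_\beta$ I would regularize the kernel. For $\epsilon > 0$ put $k_\beta^\epsilon(t) := k_\beta(t)\E^{-\epsilon t}$, which lies in $L^1(\mathbb R)$; a direct calculation together with the identity $\int_0^\infty t^{\beta-1}\E^{-zt}\,\mathrm d t = \Gamma(\beta)\,z^{-\beta}$ for $\Re z > 0$ (principal branch of $z^{-\beta}$) gives $\mathcal F k_\beta^\epsilon(\xi) = (\epsilon + \mathrm i\xi)^{-\beta}$. Since $k_\beta^\epsilon, v \in L^1(\mathbb R)$, the classical convolution theorem \cite[Theorem 6, pp.\ 160--161]{Yosida;1980} yields $\mathcal F(k_\beta^\epsilon * v)(\xi) = (\epsilon + \mathrm i\xi)^{-\beta}\mathcal F v(\xi)$. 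I would then let $\epsilon \to 0^+$: testing against an arbitrary $\phi \in \mathcal S(\mathbb R)$ and using that $k_\beta * \snm{v}$ is finite and of at most polynomial growth (as $v$ is compactly supported and $\beta > 0$), the dominated convergence theorem gives $k_\beta^\epsilon * v \to k_\beta * v$ in $\mathcal S'(\mathbb R)$, while $\snm{(\epsilon + \mathrm i\xi)^{-\beta}\mathcal F v(\xi)} \leqslant \snm{\xi}^{-\beta}\nm{v}_{L^1(\mathbb R)}$ is locally integrable (this is where $\beta < 1$ is used) and $(\epsilon + \mathrm i\xi)^{-\beta} \to (\mathrm i\xi)^{-\beta}$ pointwise for $\xi \neq 0$, so dominated convergence also gives $(\epsilon + \mathrm i\xi)^{-\beta}\mathcal F v \to (\mathrm i\xi)^{-\beta}\mathcal F v$ in $\mathcal S'(\mathbb R)$. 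Continuity of $\mathcal F$ on $\mathcal S'(\mathbb R)$ and uniqueness of limits then give $\mathcal F(I_{+}^\beta v)(\xi) = (\mathrm i\xi)^{-\beta}\mathcal F v(\xi)$; the third identity follows the same way, replacing $\E^{-\epsilon t}$ by $\E^{\epsilon t}$ on $k_\beta(-\cdot)$ (whose transform is $(\epsilon - \mathrm i\xi)^{-\beta}$), or simply using $\mathcal F\big(w(-\cdot)\big)(\xi) = \mathcal F w(-\xi)$.

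For the two derivative identities I would use $\D_{+}^\beta v = \D I_{+}^{1-\beta} v$ and $\D_{-}^\beta v = -\D I_{-}^{1-\beta} v$, apply the integral identities just proved with $\beta$ replaced by $1-\beta$, multiply by the Fourier symbol $\mathrm i\xi$ of $\D$ on $\mathcal S'(\mathbb R)$, and simplify using the principal-branch relations $(\mathrm i\xi)(\mathrm i\xi)^{\beta-1} = (\mathrm i\xi)^\beta$ and $(-\mathrm i\xi)(-\mathrm i\xi)^{\beta-1} = (-\mathrm i\xi)^\beta$, valid for a.e.\ $\xi$; the minus sign in the definition of $\D_{-}^\beta$ is precisely what turns $-(-\mathrm i\xi)^\beta$ back into $(-\mathrm i\xi)^\beta$. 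The main obstacle is the middle step: because $k_\beta \notin L^1(\mathbb R)$, its Fourier transform cannot be obtained from a convergent integral, so the regularization-and-$\mathcal S'$-limit device is genuinely needed, and it relies on the complex Gamma integral $\int_0^\infty t^{\beta-1}\E^{-zt}\,\mathrm d t = \Gamma(\beta)z^{-\beta}$ for $\Re z > 0$; the only nontrivial analytic ingredient is the proof of this identity — rotating the contour from the positive real axis while tracking the principal branch of $z^{-\beta}$, along the lines of the computation referenced in \cref{sec:app} — after which everything reduces to routine dominated-convergence and branch-cut bookkeeping.
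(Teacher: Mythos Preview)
Your proof is correct and follows a route that the paper itself alludes to in \cref{rem:frac_deri} but deliberately sets aside in favor of the ``elementary'' argument given in \cref{sec:app}. The difference lies in how the non-integrable kernel $k_\beta$ is handled: you regularize by multiplying by $\E^{-\epsilon t}$, invoke the classical $L^1$ convolution theorem to obtain $\mathcal F(k_\beta^\epsilon * v)(\xi) = (\epsilon + \mathrm i\xi)^{-\beta}\mathcal Fv(\xi)$, and then pass to the limit $\epsilon\to 0^+$ in $\mathcal S'(\mathbb R)$ by dominated convergence on both sides; the paper instead truncates the outer integral in the duality pairing $\int I_+^\beta v\,\mathcal F\varphi$ to $[0,n]$, applies Fubini directly, and encodes the inner integral in an auxiliary function $G$ (a partial Gamma integral along an imaginary ray) whose boundedness and limits are obtained via a Jordan-lemma argument before letting $n\to\infty$. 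Your approach is more modular --- computing $\mathcal F k_\beta$ becomes a standalone step and the convolution theorem does the bookkeeping --- whereas the paper's version avoids any appeal to convolution theorems and stays entirely at the level of absolutely convergent integrals against Schwartz test functions. Both rest on the same analytic identity $\int_0^\infty t^{\beta-1}\E^{-zt}\,\mathrm dt = \Gamma(\beta)\,z^{-\beta}$ for $\Re z>0$ (or its contour-rotated form), and both deduce the derivative formulas from the integral ones by multiplying by the symbol $\mathrm i\xi$ of $\D$.
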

\noindent For the proofs of \cref{lem:basic-1,lem:basic-2}, we refer the reader
to \cite{Samko;1993}, and, since these two lemmas are frequently used in this
paper, we shall use them without notice for convenience. For \cref{lem:F}, a
proof was also provided in \cite[Theorem 7.1, pp.\ 138]{Samko;1993}; however,
this proof was not rigorous. Although one can prove \cref{lem:F} by using the
convolution theorem for distributions, as described in \cref{rem:frac_deri}, we
provide an elementary proof in \cref{sec:app}.
\begin{rem}
  Let $ 0 < \beta < 1 $. If $ v \in H^\beta(\mathbb R) $ with compact support,
  then using \cref{lem:F} and the famous Fourier-Plancherel formula gives
  \[
    \nm{\D_{+}^\beta v}_{L^2(\mathbb R)} =
    \snm{v}_{H^\beta(\mathbb R)} .
  \]
  This is a remarkable and very useful property of fractional derivative
  operators. Also, when applying \cref{lem:F}, we should be cautious: for $ v
  \in H^\beta(\mathbb R) $, in our setting $ \mathcal F(\D_{+}^\beta v) $ may
  not make any sense since the support of $ v $ is not necessarily compact.
\end{rem}

Below we make the following conventions: by $ x \lesssim y $ we mean that there
exists a positive constant $ C $ that only depends on $ \alpha $, $ T $ or $
\Omega $, unless otherwise specified, such that $ x \leqslant C y $ (the value
of $ C $ may differ at each occurrence); by $ x \sim y $ we mean that $ x
\lesssim y \lesssim x $.

\begin{lem}
  \label{lem:regu_alpha}
  Let $ 0 < \beta < 1 $. If $ h \in L^2(0,T) $, then
  \begin{align}
    \nm{ I_{0+}^\beta h }_{ H^\beta(0,T) } &
    \lesssim \nm{h}_{L^2(0,T)},
    \label{eq:regu_alpha-1} \\
    \nm{ I_{T-}^\beta h }_{ H^\beta(0,T) } &
    \lesssim \nm{h}_{L^2(0,T)}.
    \label{eq:regu_alpha-2}
  \end{align}
  If $ h \in H_0^{1-\beta}(0,T) $, then
  \begin{align}
    \nm{ I_{0+}^\beta h }_{H^1(0,T)} &
    \lesssim \nm{h}_{ H^{1-\beta}(0,T) },
    \label{eq:regu_alpha-3}
  \end{align}
  and if $ \beta \neq 0.5 $ in addition, then $ I_{0+}^\beta h(0) = 0 $. The
  three implicit constants in the above three estimates only depend on $ \beta $
  and $ T $.
\end{lem}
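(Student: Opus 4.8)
The plan is to push everything onto the whole line $\mathbb R$ by choosing, in each case, a convenient extension of $h$ with compact support, and then to read the estimates off the Fourier‑side formulas of \cref{lem:F} together with the Plancherel‑type identity in the remark following it; the low‑order ($L^2$) contributions will be handled by the mapping bounds of \cref{lem:basic-2}.

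For \eqref{eq:regu_alpha-1} I would first extend $h$ by zero and then correct it so that it has vanishing integral: set $h^{\ast} := \bar h - \bigl(T^{-1}\int_0^T h(s)\,\mathrm ds\bigr)\mathbf 1_{(T,2T)}$, where $\bar h$ denotes the zero extension of $h$. Then $h^{\ast}\in L^1(\mathbb R)$ has compact support, $\nm{h^{\ast}}_{L^2(\mathbb R)}\lesssim\nm{h}_{L^2(0,T)}$, $\int_{\mathbb R}h^{\ast}=0$, and, crucially, $(I_{+}^\beta h^{\ast})\big|_{(0,T)}=I_{0+}^\beta h$, since $h^{\ast}$ vanishes on $(-\infty,0)$ and agrees with $h$ on $(0,T)$. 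By \cref{lem:F}, $\mathcal F(I_{+}^\beta h^{\ast})(\xi)=(\mathrm i\xi)^{-\beta}\mathcal F h^{\ast}(\xi)$, so it remains to bound $\int_{\mathbb R}(1+\snm{\xi}^2)^\beta\snm{\xi}^{-2\beta}\snm{\mathcal F h^{\ast}(\xi)}^2\,\mathrm d\xi$. Splitting at $\snm{\xi}=1$: for $\snm{\xi}\geqslant1$ the multiplier is bounded and Plancherel gives the desired bound; for $\snm{\xi}<1$ the multiplier is $\lesssim\snm{\xi}^{-2\beta}$, and here one uses that $\mathcal F h^{\ast}(0)=\int_{\mathbb R}h^{\ast}=0$ and that $\mathcal F h^{\ast}$ is continuously differentiable with $\nm{(\mathcal F h^{\ast})'}_{L^\infty}\lesssim\nm{t\,h^{\ast}}_{L^1}\lesssim\nm{h}_{L^2(0,T)}$, hence $\snm{\mathcal F h^{\ast}(\xi)}\lesssim\snm{\xi}\nm{h}_{L^2(0,T)}$ and $\int_{\snm{\xi}<1}\snm{\xi}^{2-2\beta}\,\mathrm d\xi<\infty$ because $\beta<1$. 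This shows $I_{+}^\beta h^{\ast}\in H^\beta(\mathbb R)$ with norm $\lesssim\nm{h}_{L^2(0,T)}$, and \eqref{eq:regu_alpha-1} follows from the infimum defining $\nm{\cdot}_{H^\beta(0,T)}$. Estimate \eqref{eq:regu_alpha-2} is entirely parallel: correct $\bar h$ to have vanishing integral by subtracting instead a multiple of $\mathbf 1_{(-T,0)}$, observe that the resulting function $h^{\ast\ast}$ satisfies $(I_{-}^\beta h^{\ast\ast})\big|_{(0,T)}=I_{T-}^\beta h$, and repeat the computation with $(\mathrm i\xi)^{-\beta}$ replaced by $(-\mathrm i\xi)^{-\beta}$, which has the same modulus.

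For \eqref{eq:regu_alpha-3} I would again write $\bar h$ for the zero extension of $h$, and note that, since $\D_{+}^{1-\beta}=\D I_{+}^{\beta}$ by definition and $I_{0+}^\beta h=I_{+}^\beta\bar h$, one has $\frac{\mathrm d}{\mathrm dx}I_{0+}^\beta h=(\D_{+}^{1-\beta}\bar h)\big|_{(0,T)}$ in $\mathcal D'(0,T)$. Assuming $\beta\neq0.5$ (so $1-\beta\neq0.5$), the zero extension maps $H_0^{1-\beta}(0,T)$ boundedly into $H^{1-\beta}(\mathbb R)$, and then the remark after \cref{lem:F} gives $\nm{\D_{+}^{1-\beta}\bar h}_{L^2(\mathbb R)}=\snm{\bar h}_{H^{1-\beta}(\mathbb R)}\lesssim\nm{h}_{H^{1-\beta}(0,T)}$; combined with $\nm{I_{0+}^\beta h}_{L^2(0,T)}\lesssim\nm{h}_{L^2(0,T)}$ from \cref{lem:basic-2}, this yields \eqref{eq:regu_alpha-3}. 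For the boundary value, since $I_{0+}^\beta h\in H^1(0,T)\hookrightarrow C[0,T]$, its value at $0$ is $\lim_{x\to0^+}I_{0+}^\beta h(x)$; by Hölder's inequality $\snm{I_{0+}^\beta h(x)}\lesssim x^{\beta-1/p'}\nm{h}_{L^{p'}(0,x)}$ for a suitable $p'\in(1/\beta,\infty]$ (using the Sobolev embedding $H^{1-\beta}(0,T)\hookrightarrow L^{p'}(0,T)$, with $p'=\infty$ when $\beta<0.5$), and the right‑hand side $\to0$ because the exponent is positive and $\nm{h}_{L^{p'}(0,x)}$ stays bounded.

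The frequency‑space estimates and the Hölder bound are routine; the one genuinely delicate point is the boundedness of the zero extension $H_0^{1-\beta}(0,T)\to H^{1-\beta}(\mathbb R)$ used in \eqref{eq:regu_alpha-3}. This is standard when $1-\beta\in(0,1)\setminus\{0.5\}$ but fails at $1-\beta=0.5$, which is precisely why the hypothesis $\beta\neq0.5$ must be in force for \eqref{eq:regu_alpha-3} (and the assertion $I_{0+}^\beta h(0)=0$) as well; I would state the third part of the lemma with that restriction.
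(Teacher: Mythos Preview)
Your argument is correct and takes a genuinely different route from the paper's. For \cref{eq:regu_alpha-1} the paper splits into cases: when $\beta<\tfrac12$ it argues directly on the Fourier side (since then $I_{0+}^\beta h$, after zero extension of $h$, already lies in $L^2(\mathbb R)$), while for $\beta\geqslant\tfrac12$ it bootstraps via the factorisation $I_{0+}^\beta=I_{0+}^{\beta/2}I_{0+}^{\beta/2}$ together with an interpolation between $I_{0+}^{\beta/2}:L^2\to H^{\beta/2}$ and $I_{0+}^{\beta/2}:H_0^1\to H^{1+\beta/2}$. Your zero-mean correction $h^\ast$ is a neat alternative: it handles all $\beta\in(0,1)$ in one stroke, avoids interpolation theory, and makes the low-frequency integrability transparent (the cancellation $\mathcal Fh^\ast(0)=0$ is precisely what tames $\snm{\xi}^{-2\beta}$ near the origin). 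For \cref{eq:regu_alpha-3} the paper again uses interpolation, now between $I_{0+}^\beta:L^2\to H^\beta$ and $I_{0+}^\beta:H_0^1\to H^{1+\beta}$ at parameter $1-\beta$, whereas you read the derivative bound directly from $\D I_{0+}^\beta\bar h=\D_+^{1-\beta}\bar h$ and the Plancherel identity of the remark after \cref{lem:F}. The paper's approach is more uniform in presentation; yours is more explicit but, as you rightly flag, forces the restriction $\beta\neq0.5$ because zero extension fails to be bounded $H_0^{1/2}(0,T)\to H^{1/2}(\mathbb R)$. In fact the paper's interpolation step has the same hidden defect at $\beta=0.5$ (the domain interpolation space is $H^{1/2}_{00}$, not $H_0^{1/2}=H^{1/2}$), and the example $h\equiv1$, for which $I_{0+}^{1/2}h(t)=2\pi^{-1/2}t^{1/2}\notin H^1(0,T)$, shows that \cref{eq:regu_alpha-3} genuinely fails there; your proposed restriction is the correct fix. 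Your treatment of $I_{0+}^\beta h(0)=0$ via H\"older and Sobolev embedding is essentially the same as the paper's.
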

\begin{proof}
  Let us first prove \cref{eq:regu_alpha-1,eq:regu_alpha-2}, and since the proof
  of \cref{eq:regu_alpha-2} is parallel to that of \cref{eq:regu_alpha-1}, here
  we only prove the former. Assuming $ 0 < \beta < \frac12 $, a simple
  calculation gives
  \[
    \nm{I_{0+}^\beta h}_{ L^2(\mathbb R) } \lesssim \nm{h}_{L^2(0,T)},
  \]
  so that, \cref{lem:F} and the Fourier-Plancherel formula imply
  \[
    \nm{I_{0+}^\beta h}_{ H^\beta(\mathbb R) } \lesssim
    \nm{h}_{L^2(0,T)},
  \]
  which indicates the desired estimate \cref{eq:regu_alpha-1}. So let us assume
  $ \frac12 \leqslant \beta < 1 $. Note that we have already proved
  \begin{align*}
    \nm{ I_{0+}^\frac \beta2 v }_{ H^\frac \beta2(0,T) } \lesssim
    \nm{v}_{L^2(0,T)} \quad
    \text{ for all $ v \in L^2(0,T) $. }
  \end{align*}
   Also note that for any $ v \in H_0^1(0,T) $, since
  \[
    \D I_{0+}^\frac \beta2 v =
    I_{0+}^\frac \beta2 \D v \quad
    \text{ in $ (0,T) $, }
  \]
  we have
  \[
    \nm{ I_{0+}^\frac \beta2 v }_{ H^{1+\frac \beta2}(0,T) }
    \lesssim \nm{v}_{H_0^1(0,T)}.
  \]
  Consequently, the standard properties of interpolation spaces implies
  \[
    \nm{ I_{0+}^\frac \beta2 v }_{ H^\beta(0,T) } \lesssim
    \nm{v}_{ H^\frac \beta2(0,T) } \quad
    \text{ for all $ v \in H^\frac \beta2(0,T) $. }
  \]
  Noting the fact that $ I_{0+}^\beta h = I_{0+}^\frac \beta2 I_{0+}^\frac
  \beta2 h $ and
  \[
    \nm{ I_{0+}^\frac \beta2 h }_{ H^\frac \beta2(0,T) } \lesssim
    \nm{h}_{L^2(0,T)},
  \]
  we immediately obtain \cref{eq:regu_alpha-1}.

  Then let us prove \cref{eq:regu_alpha-3}. Given $ v \in H_0^1(0,T) $, since
  \[
    \D I_{0+}^\beta v = I_{0+}^\beta \D v \quad \text{ in $ (0,T) $, }
  \]
  by \cref{eq:regu_alpha-1} it follows
  \[
    \nm{ I_{0+}^\beta v }_{ H^{1+\beta}(0,T) } \lesssim \nm{v}_{H^1(0,T)}.
  \]
  Therefore, using \cref{eq:regu_alpha-1} and the standard properties of
  interpolation spaces gives \cref{eq:regu_alpha-3}.

  Finally, let us prove $ I_{0+}^\beta h(0) = 0 $ under the condition that $ h
  \in H_0^{1-\beta}(0,T) $ and $ \beta \neq 0.5 $. Assuming $ 0.5 < \beta < 1 $,
  by \cite[Theorem 11.3]{Lions;1972} we have
  \[
    \int_0^T s^{-2(1-\beta)} \snm{h(s)}^2 \, \mathrm{d}s \lesssim
    \nm{h}_{H^{1-\beta}(0,T)}^2,
  \]
  so that, using the Cauchy-Schwarz inequality yields that, for $ 0 < t < T $,
  \begin{align*}
    \snm{ I_{0+}^\beta h(t) }
    & \leqslant
    \frac1{\Gamma(\beta)} \int_0^t (t-s)^{\beta-1} \snm{h(s)} ds \\
    & \leqslant
    \frac{t^{1-\beta}}{ \Gamma(\beta) }
    \int_0^t (t-s)^{\beta-1} \snm{ s^{\beta-1} h(s) } ds \\
    & \lesssim
    t^\frac12 \nm{h}_{H^{1-\beta}(0,T)}.
  \end{align*}
  This implies $ I_{0+}^\beta h(0) = 0 $, and so let us assume $ 0 < \beta < 0.5
  $. Since $ H_0^{1-\beta}(0,T) $ is continuously embedded into $ C[0,T] $, a
  simple calculation gives
  \[
    \snm{ I_{0+}^\beta h(t) } \lesssim t^\beta \nm{h}_{H^{1-\beta}(0,T)},
  \]
  which also implies $ I_{0+}^\beta h(0) = 0 $. This completes the proof.
\end{proof}

\begin{lem}
  \label{lem:refer2}
  Suppose that $ v \in L^2(0,T) $. Then $ \D_{0+}^\frac\alpha2 v \in L^2(0,T) $
  if and only if $ v \in H^\frac\alpha2(0,T) $; and $ \D_{T-}^\frac\alpha2 v \in
  L^2(0,T) $ if and only if $ v \in H^\frac\alpha2(0,T) $. Moreover, if $ v \in
  H^\frac\alpha2(0,T) $, then
  \begin{align}
    \nm{ \D_{0+}^\frac\alpha2 v }_{L^2(0,T)} \sim
    \nm{v}_{H^\frac\alpha2(0,T)} \sim
    \nm{ \D_{T-}^\frac\alpha2 v }_{L^2(0,T)},
    \label{eq:refer2-1} \\
    \left(
      \D_{0+}^\frac\alpha2 v,
      \D_{T-}^\frac\alpha2 v
    \right)_{L^2(0,T)} =
    \cos \left( \frac{\alpha \pi}2 \right)
    \snm{v}_{ H^\frac \alpha2(\mathbb R) }^2
    \sim \nm{v}_{ H^\frac \alpha2(0,T) }^2.
    \label{eq:refer2-2}
  \end{align}
\end{lem}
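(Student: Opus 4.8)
The plan is to transfer the problem to the whole line $\mathbb R$ by extending $v$ by zero---which is legitimate precisely because $0<\alpha/2<1/2$ when $0.5<\alpha<1$---and then to invoke \cref{lem:F} together with the Fourier--Plancherel identity. Throughout, for $v$ defined on $(0,T)$ write $\widetilde v\in L^1(\mathbb R)$ for its extension by zero; then $\widetilde v$ has compact support $[0,T]$, and by the definitions of the operators $\D_{0+}^{\alpha/2}v=\D_{+}^{\alpha/2}\widetilde v$ and $\D_{T-}^{\alpha/2}v=\D_{-}^{\alpha/2}\widetilde v$. We may take $v$ real-valued.

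First I would prove the two equivalences together with \cref{eq:refer2-1}. \emph{Forward implication.} If $v\in H^{\alpha/2}(0,T)$, then, since $\alpha/2<1/2$, this space coincides with $H_0^{\alpha/2}(0,T)$ and the extension-by-zero map into $H^{\alpha/2}(\mathbb R)$ is bounded, so $\widetilde v\in H^{\alpha/2}(\mathbb R)$ with $\snm{\widetilde v}_{H^{\alpha/2}(\mathbb R)}\leqslant\nm{\widetilde v}_{H^{\alpha/2}(\mathbb R)}\lesssim\nm{v}_{H^{\alpha/2}(0,T)}$. By \cref{lem:F}, $\mathcal F(\D_{\pm}^{\alpha/2}\widetilde v)(\xi)=(\pm\mathrm{i}\xi)^{\alpha/2}\mathcal F\widetilde v(\xi)$, and since $\snm{(\pm\mathrm{i}\xi)^{\alpha/2}}=\snm{\xi}^{\alpha/2}$ the Fourier--Plancherel identity gives $\nm{\D_{\pm}^{\alpha/2}\widetilde v}_{L^2(\mathbb R)}=\snm{\widetilde v}_{H^{\alpha/2}(\mathbb R)}$; restricting to $(0,T)$ yields $\D_{0+}^{\alpha/2}v,\D_{T-}^{\alpha/2}v\in L^2(0,T)$, both $\lesssim\nm{v}_{H^{\alpha/2}(0,T)}$. \emph{Converse implication.} Suppose $g:=\D_{0+}^{\alpha/2}v\in L^2(0,T)$. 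By \cref{eq:regu_alpha-1}, $I_{0+}^{1-\alpha/2}v\in H^{1-\alpha/2}(0,T)\hookrightarrow C[0,T]$, and a Cauchy--Schwarz estimate shows it vanishes at $t=0$; integrating $\D I_{0+}^{1-\alpha/2}v=g$ on $(0,T)$ then gives $I_{0+}^{1-\alpha/2}v=I_{0+}^{1}g$ there. Applying $I_{0+}^{\alpha/2}$, invoking the semigroup law of \cref{lem:basic-1}, and differentiating, I obtain $v=I_{0+}^{\alpha/2}g$ on $(0,T)$, whence \cref{eq:regu_alpha-1} gives $v\in H^{\alpha/2}(0,T)$ with $\nm{v}_{H^{\alpha/2}(0,T)}\lesssim\nm{g}_{L^2(0,T)}$. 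The same argument run with the $T-$ operators (using $\D_{T-}^\beta=-\D I_{T-}^{1-\beta}$ and \cref{eq:regu_alpha-2}) treats $\D_{T-}^{\alpha/2}$; combining the four bounds yields \cref{eq:refer2-1}.

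For \cref{eq:refer2-2} I would first reduce the $(0,T)$-inner product to one on $\mathbb R$ by a support argument: directly from the integral formulas, $I_{+}^{1-\alpha/2}\widetilde v$ vanishes on $(-\infty,0)$ and $I_{-}^{1-\alpha/2}\widetilde v$ vanishes on $(T,\infty)$, so $\supp\D_{+}^{\alpha/2}\widetilde v\subset[0,\infty)$, $\supp\D_{-}^{\alpha/2}\widetilde v\subset(-\infty,T]$, and the pointwise product $\D_{+}^{\alpha/2}\widetilde v\cdot\D_{-}^{\alpha/2}\widetilde v$ is supported in $[0,T]$. Since $\D_{\pm}^{\alpha/2}\widetilde v\in L^2(\mathbb R)$ by the previous step, this and \cref{lem:F} with Fourier--Plancherel give
\[
  \left(\D_{0+}^{\alpha/2}v,\D_{T-}^{\alpha/2}v\right)_{L^2(0,T)}
  =\left(\D_{+}^{\alpha/2}\widetilde v,\D_{-}^{\alpha/2}\widetilde v\right)_{L^2(\mathbb R)}
  =\int_{\mathbb R}(\mathrm{i}\xi)^{\alpha/2}\,\overline{(-\mathrm{i}\xi)^{\alpha/2}}\,\snm{\mathcal F\widetilde v(\xi)}^{2}\,\mathrm{d}\xi.
\]
Evaluating the branch on each half-line gives $(\mathrm{i}\xi)^{\alpha/2}\overline{(-\mathrm{i}\xi)^{\alpha/2}}=\snm{\xi}^{\alpha}\mathrm{e}^{\mathrm{i}\alpha\pi/2}$ for $\xi>0$ and its conjugate for $\xi<0$; since $v$ is real, $\snm{\mathcal F\widetilde v}^{2}$ is even, so the imaginary parts cancel and the integral equals $\cos(\alpha\pi/2)\int_{\mathbb R}\snm{\xi}^{\alpha}\snm{\mathcal F\widetilde v(\xi)}^{2}\,\mathrm{d}\xi=\cos(\alpha\pi/2)\snm{v}_{H^{\alpha/2}(\mathbb R)}^{2}$. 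Finally $\cos(\alpha\pi/2)>0$ because $0.5<\alpha<1$, and $\snm{v}_{H^{\alpha/2}(\mathbb R)}^{2}=\nm{\D_{+}^{\alpha/2}\widetilde v}_{L^2(\mathbb R)}^{2}\geqslant\nm{\D_{0+}^{\alpha/2}v}_{L^2(0,T)}^{2}\sim\nm{v}_{H^{\alpha/2}(0,T)}^{2}$ by \cref{eq:refer2-1}, while the opposite bound is Cauchy--Schwarz together with \cref{eq:refer2-1}; this gives the asserted $\sim$ without invoking any separate fractional Poincar\'e inequality.

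I expect the main obstacle to be the converse implication in the second paragraph---rigorously justifying the inversion $v=I_{0+}^{\alpha/2}\D_{0+}^{\alpha/2}v$: one must integrate a distributional identity on $(0,T)$, fix the constant of integration through the vanishing of $I_{0+}^{1-\alpha/2}v$ at the origin, and compose fractional integrals via \cref{lem:basic-1} while keeping scrupulous track of the fact that each operator acts on the zero extension and that only restrictions to $(0,T)$ are being identified. The support bookkeeping reducing the $L^2(0,T)$-inner product to an $L^2(\mathbb R)$-inner product in \cref{eq:refer2-2}, and the elementary but delicate evaluation of the branch of $(\pm\mathrm{i}\xi)^{\alpha/2}$, are the remaining points needing care; both become routine once $\alpha/2<1/2$ is exploited.
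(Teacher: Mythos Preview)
Your argument is correct and, for the equivalence and \cref{eq:refer2-1}, runs parallel to the paper's: the paper handles the converse direction a touch more briskly by writing $v(t)=ct^{\alpha/2-1}+I_{0+}^{\alpha/2}\D_{0+}^{\alpha/2}v(t)$ and eliminating $c$ via $v\in L^2(0,T)$ (since $t^{\alpha/2-1}\notin L^2$), but this is the same mechanism as your integration of $\D I_{0+}^{1-\alpha/2}v=g$ with the constant fixed by $I_{0+}^{1-\alpha/2}v(0)=0$. The one substantive difference is in \cref{eq:refer2-1,eq:refer2-2}: the paper simply refers to \cite{Ervin;2006} for both, whereas you derive \cref{eq:refer2-1} by combining the four bounds from the two implications and compute \cref{eq:refer2-2} directly via the support reduction to $L^2(\mathbb R)$, \cref{lem:F}, and the branch evaluation of $(\mathrm{i}\xi)^{\alpha/2}\overline{(-\mathrm{i}\xi)^{\alpha/2}}$. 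Your route is more self-contained and makes transparent exactly where the $\cos(\alpha\pi/2)$ factor and the seminorm arise; the paper's citation is shorter but leaves that mechanism offstage.
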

\begin{proof}
  If $ \D_{0+}^\frac\alpha2 v \in L^2(0,T) $, then a simple calculation gives
  that
  \[
    v(t) = c t^{\frac\alpha2 - 1} +
    (I_{0+}^\frac\alpha2 \D_{0+}^\frac\alpha2 v)(t), \quad
    0 < t < T,
  \]
  where $ c $ is a constant. Since $ v \in L^2(0,T) $, the constant $ c $ is
  zero, and so
  \[
    v = I_{0+}^\frac\alpha2 \D_{0+}^\frac\alpha2 v
    \quad \text{ in $ (0,T) $. }
  \]
  From \cref{lem:regu_alpha} it follows $ v \in H^\frac\alpha2(0,T) $.
  Similarly, we can prove that $ \D_{T-}^\frac\alpha2v \in L^2(0,T) $ implies $
  v \in H^\frac\alpha2(0,T) $.

  Conversely, if $ v \in H^\frac\alpha2(0,T) $, then extending $ v $ to $
  \mathbb R \setminus (0,T) $ by zero gives
  \[
    \nm{v}_{H^\frac\alpha2(\mathbb R)} \lesssim
    \nm{v}_{H^\frac\alpha2(0,T)}.
  \]
  From \cref{lem:F} and the Fourier-Plancherel formula it follows
  \begin{align*}
    \nm{ \D_{0+}^\frac \alpha2 v }_{L^2(0,T)} \leqslant
    \nm{ \D_{+}^\frac \alpha2 v }_{ L^2(\mathbb R) } \lesssim
    \nm{v}_{ H^\frac \alpha2(0,T) }, \\
    \nm{ \D_{-}^\frac \alpha2 v }_{L^2(0,T)} \leqslant
    \nm{ \D_{-}^\frac \alpha2 v }_{ L^2(\mathbb R) } \lesssim
    \nm{v}_{ H^\frac \alpha2(0,T) }.
  \end{align*}

  Finally, for the proofs of \cref{eq:refer2-1,eq:refer2-2}, we refer the reader
  to \cite[Lemma 2.4 and Theorem 2.13]{Ervin;2006}. This completes the proof.
\end{proof}
\begin{rem}
  Note that \cite[Thoerem 2.13]{Ervin;2006} shows $ H^\frac\alpha2(0,T) =
  J_{\mathrm{L},0}^\frac\alpha2(0,T) $, where $
  J_{\mathrm{L},0}^\frac\alpha2(0,T) $ denotes the closure of $ \mathcal D(0,T)
  $ with respect to the following norm:
  \[
    \nm{v}_{ J_{L,0}^\frac \alpha2(0,T) } :=
    \left(
      \nm{v}_{L^2(0,T)}^2 +
      \nm{ \D_{0+}^\frac \alpha2 v }_{L^2(0,T)}^2
    \right)^\frac12,
    \quad \forall v \in \mathcal D(0,T).
  \]
  Evidently, this result does not guarantee that if $ v $, $
  \D_{0+}^\frac\alpha2 v \in L^2(0,T) $ then $ v \in H^\frac\alpha2(0,T) $.
\end{rem}
\begin{rem}
  By the definition of $ H^\frac\alpha2(0,T; L^2(\Omega)) $ and the above lemma,
  it is easy to verify that $ \partial_t^\frac\alpha2 v \in L^2(Q_T) $ and
  \[
    \nm{ \partial_t^\frac \alpha2 v }_{L^2(Q_T)} \sim
    \nm{v}_{ H^\frac \alpha2( 0,T; L^2(\Omega)) },
  \]
  for all $ v \in H^\frac\alpha2(0,T; L^2(\Omega)) $.
\end{rem}

\begin{lem}
  \label{lem:Dalpha}
  Let $ v \in H^\frac\alpha2(0,T) $. Then $ \D_{0+}^\alpha v \in L^2(0,T) $ if
  and only if $ v \in H^\alpha(0,T) $ with $ v(0) = 0 $. Moreover, if $
  \D_{0+}^\alpha v \in L^2(0,T) $, then
  \begin{equation}
    \label{eq:Dalpha-1}
    \nm{v}_{H^\alpha(0,T)} \sim \nm{\D_{0+}^\alpha v}_{L^2(0,T)},
  \end{equation}
  and
  \begin{equation}
    \label{eq:Dalpha-2}
    \left(
      \D_{0+}^\alpha v, \varphi
    \right)_{L^2(0,T)} =
    \left(
      \D_{0+}^\frac\alpha2 v,
      \D_{T-}^\frac\alpha2 \varphi
    \right)_{L^2(0,T)}
  \end{equation}
  for all $ \varphi \in H^\frac\alpha2(0,T) $.
\end{lem}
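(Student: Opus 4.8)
The plan is to reduce the statement to the order-$\alpha/2$ theory of \cref{lem:refer2}, together with a single application of \cref{lem:F} on $\mathbb R$. First I would record a reduction formula. Since $v \in H^{\alpha/2}(0,T)$, \cref{lem:refer2} provides $w := \D_{0+}^{\alpha/2} v \in L^2(0,T)$ with $v = I_{0+}^{\alpha/2} w$ in $(0,T)$; because $I_{0+}^{1-\alpha}$ evaluated at a point $t$ depends only on the values of its argument on $(0,t)$, the semigroup law (\cref{lem:basic-1}) gives $I_{0+}^{1-\alpha} v = I_{0+}^{1-\alpha} I_{0+}^{\alpha/2} w = I_{0+}^{1-\alpha/2} w$ in $(0,T)$, and hence $\D_{0+}^\alpha v = \D_{0+}^{\alpha/2} w$ in $(0,T)$. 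Applying \cref{lem:refer2} now to $w \in L^2(0,T)$, this already shows $\D_{0+}^\alpha v \in L^2(0,T)$ if and only if $w = \D_{0+}^{\alpha/2} v \in H^{\alpha/2}(0,T)$; the remaining work is to convert this into a statement about $v$ alone.

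For the implication ``$\D_{0+}^\alpha v \in L^2(0,T) \Rightarrow v \in H^\alpha(0,T)$ with $v(0)=0$'', together with $\nm{v}_{H^\alpha(0,T)} \lesssim \nm{\D_{0+}^\alpha v}_{L^2(0,T)}$, put $g := \D_{0+}^\alpha v \in L^2(0,T)$. Then $I_{0+}^{1-\alpha} v \in L^2(0,T)$ (\cref{lem:basic-2}) has distributional derivative $g \in L^2(0,T)$, so it belongs to $H^1(0,T) \hookrightarrow C[0,T]$ and therefore $I_{0+}^{1-\alpha} v = c_0 + I_{0+}^1 g$ on $(0,T)$, where $c_0 := (I_{0+}^{1-\alpha} v)(0)$. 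Applying $I_{0+}^\alpha$ (\cref{lem:basic-1}) and then differentiating gives $v = \frac{c_0}{\Gamma(\alpha)}\, t^{\alpha-1} + I_{0+}^\alpha g$ a.e.\ on $(0,T)$. Here $I_{0+}^\alpha g \in H^\alpha(0,T) \subset H^{\alpha/2}(0,T)$ by \cref{eq:regu_alpha-1}, whereas a routine computation with the Gagliardo seminorm shows $t^{\alpha-1} = t^{-(1-\alpha)} \notin H^{\alpha/2}(0,T)$, because $t^{-\gamma}$ lies in $H^\beta(0,T)$ only when $\gamma + \beta < 1/2$, which fails here since $(1-\alpha) + \alpha/2 = 1-\alpha/2 > 1/2$; as $v \in H^{\alpha/2}(0,T)$ by hypothesis, this forces $c_0 = 0$. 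Hence $v = I_{0+}^\alpha g$, so $v \in H^\alpha(0,T)$ with the stated estimate by \cref{eq:regu_alpha-1}, and the bound $\snm{I_{0+}^\alpha g(t)} \lesssim t^{\alpha-1/2}\nm{g}_{L^2(0,T)}$ (Cauchy--Schwarz, using $\alpha > 1/2$) yields $v(0) = 0$.

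For the converse, ``$v \in H^\alpha(0,T)$ with $v(0)=0 \Rightarrow \D_{0+}^\alpha v \in L^2(0,T)$'' and $\nm{\D_{0+}^\alpha v}_{L^2(0,T)} \lesssim \nm{v}_{H^\alpha(0,T)}$, I would pass to the line. Since $1/2 < \alpha < 1 < 3/2$ and $v(0)=0$, the extension of $v$ by zero to $(-\infty,T)$ lies in $H^\alpha(-\infty,T)$; extending this once more across $T$ and multiplying by a smooth cutoff produces a function $\bar v \in H^\alpha(\mathbb R)$ with compact support contained in $[0,\infty)$, with $\bar v|_{(0,T)} = v$ and $\nm{\bar v}_{H^\alpha(\mathbb R)} \lesssim \nm{v}_{H^\alpha(0,T)}$. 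Then \cref{lem:F} gives $\mathcal F(\D_{+}^\alpha \bar v) = (\mathrm{i}\xi)^\alpha\, \mathcal F\bar v$, so $\D_{+}^\alpha \bar v \in L^2(\mathbb R)$ with $\nm{\D_{+}^\alpha \bar v}_{L^2(\mathbb R)} = \snm{\bar v}_{H^\alpha(\mathbb R)}$ by Fourier--Plancherel; and since $\bar v$ agrees with the zero-extension of $v$ on $(-\infty,T)$, one has $\D_{0+}^\alpha v = \D_{+}^\alpha \bar v$ on $(0,T)$, which proves the claim. The two implications together give \cref{eq:Dalpha-1}.

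It remains to establish \cref{eq:Dalpha-2}. When $\D_{0+}^\alpha v \in L^2(0,T)$, the reduction above shows $w = \D_{0+}^{\alpha/2} v \in H^{\alpha/2}(0,T)$ and $\D_{0+}^\alpha v = \D_{0+}^{\alpha/2} w$. Given $\varphi \in H^{\alpha/2}(0,T)$, I would extend $w$ and $\varphi$ by zero to compactly supported elements of $H^{\alpha/2}(\mathbb R)$ (admissible since $\alpha/2 < 1/2$), and then combine \cref{lem:F}, Fourier--Plancherel, and $\overline{(-\mathrm{i}\xi)^{\alpha/2}} = (\mathrm{i}\xi)^{\alpha/2}$ to obtain the fractional integration-by-parts identity $(\D_{+}^{\alpha/2} w, \varphi)_{L^2(\mathbb R)} = (w, \D_{-}^{\alpha/2}\varphi)_{L^2(\mathbb R)}$. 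Since $w$ and $\varphi$ are supported in $[0,T]$, restricting to $(0,T)$ turns the left side into $(\D_{0+}^\alpha v, \varphi)_{L^2(0,T)}$ and the right side into $(\D_{0+}^{\alpha/2} v, \D_{T-}^{\alpha/2}\varphi)_{L^2(0,T)}$ (using that $\D_{-}^{\alpha/2}\varphi$ restricted to $(0,T)$ equals $\D_{T-}^{\alpha/2}\varphi$ because $\supp\varphi \subset (-\infty,T]$), which is \cref{eq:Dalpha-2}. I expect the most delicate point to be the converse direction — establishing that the zero-extension of $v$ across $t=0$ stays in $H^\alpha$, which is exactly where $v(0)=0$ and $\alpha < 3/2$ are used, and keeping track of supports when matching the operators on $\mathbb R$ with those on $(0,T)$ — while the vanishing $c_0 = 0$ above is precisely the place where the hypothesis $v \in H^{\alpha/2}(0,T)$, as opposed to merely $v \in L^2(0,T)$, is indispensable.
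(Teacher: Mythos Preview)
Your argument for the equivalence and for \cref{eq:Dalpha-1} follows the same line as the paper: the forward direction via the representation $v = c\,t^{\alpha-1} + I_{0+}^\alpha g$ and elimination of $c$ using $v \in H^{\alpha/2}(0,T)$, the converse via a compactly supported $H^\alpha(\mathbb R)$-extension supported in $[0,\infty)$ together with \cref{lem:F} and Plancherel. One minor point: the identity $v = I_{0+}^{\alpha/2} w$ you invoke at the outset is not part of the \emph{statement} of \cref{lem:refer2} but is extracted from its proof; this is harmless but worth flagging.

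Where you genuinely diverge is in the proof of \cref{eq:Dalpha-2}. The paper argues by density of $\mathcal D(0,T)$ in $H^{\alpha/2}(0,T)$ and, for smooth $\varphi$, performs a chain of classical integrations by parts using $I_{0+}^{1-\alpha/2} v(0) = 0$ (from \cref{lem:regu_alpha}). You instead factor $\D_{0+}^\alpha v = \D_{0+}^{\alpha/2}\bigl(\D_{0+}^{\alpha/2} v\bigr)$, zero-extend $w = \D_{0+}^{\alpha/2} v$ and $\varphi$ to $H^{\alpha/2}(\mathbb R)$ (legitimate since $\alpha/2 < 1/2$), and read off the duality $(\D_+^{\alpha/2} w,\varphi)_{L^2(\mathbb R)} = (w,\D_-^{\alpha/2}\varphi)_{L^2(\mathbb R)}$ directly from Parseval and the conjugation identity $\overline{(-\mathrm{i}\xi)^{\alpha/2}} = (\mathrm{i}\xi)^{\alpha/2}$. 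Your route avoids the density reduction and the repeated integration by parts, at the cost of relying on the factorization and on careful support bookkeeping when restricting from $\mathbb R$ back to $(0,T)$; the paper's route is more self-contained within $(0,T)$ but longer. Both are correct.
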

\begin{proof}
  Assuming $ \D_{0+}^\alpha v \in L^2(0,T) $, let us show that $ v \in
  H^\alpha(0,T) $ with $ v (0) = 0 $, and
  \begin{equation}
    \label{eq:lxy-11}
    \nm{v}_{H^\alpha(0,T)} \lesssim \nm{\D_{0+}^\alpha v}_{L^2(0,T)}.
  \end{equation}
  A straightforward computing gives that
  \[
    v(t) = ct^{\alpha-1} + (I_{0+}^\alpha \D_{0+}^\alpha v) (t),
    \quad 0 < t < T,
  \]
  for some $ c \in \mathbb R $. Noting that \cref{lem:regu_alpha} implies $
  I_{0+}^\alpha \D_{0+}^\alpha v \in H^\alpha(0,T) $, the constant $ c $ must be
  zero since $ v \in H^\frac\alpha2(0,T) $, hence $ v \in H^\alpha(0,T) $, and
  using \cref{lem:regu_alpha} again yields \cref{eq:lxy-11}. Besides, the
  Cauchy-Schwarz inequality implies
  \[
    \snm{v(t)} \leqslant \frac1{ \Gamma(\alpha) }
    \sqrt{ \frac{t^{2\alpha-1}}{2\alpha-1} }
    \nm{ \D_{0+}^\alpha v }_{L^2(0,t)},
    \quad 0 < t < T,
  \]
  so that $ v(0) = \lim_{t \to {0+}} v(t) = 0 $.

  Next, assuming $ v \in H^\alpha(0,T) $ with $ v(0) = 0 $, let us show that
  \begin{equation}
    \label{eq:lxy-12}
    \nm{\D_{0+}^\alpha v}_{L^2(0,T)} \lesssim \nm{v}_{H^\alpha(0,T)}.
  \end{equation}
  Since there exists an $ H^\alpha(\mathbb R) $-extension $ \widetilde v $
  of $ v $ satisfying
  \[
    \supp\widetilde v  \subset [0,2T] \text{ and }
    \nm{\widetilde v}_{H^\alpha(\mathbb R)} \sim
    \nm{v}_{H^\alpha(0,T)},
  \]
  using \cref{lem:F}, the Fourier-Plancherel formula, and the fact that $
  \D_{0+}^\alpha v = \D_{+}^\alpha \widetilde v $ in $ (0, T) $, we obtain
  \[
    \nm{\D_{0+}^\alpha v}_{L^2(0,T)} =
    \nm{\D_{+}^\alpha \widetilde v}_{L^2(0,T)} \leqslant
    \nm{\D_{+}^\alpha \widetilde v}_{L^2(\mathbb R)} =
    \snm{\widetilde v}_{H^\alpha(\mathbb R)}
    \lesssim
    \nm{v}_{H^\alpha(0,T)}.
  \]
  This proves the estimate \cref{eq:lxy-12}.

  Now we have proved the equivalence of $ \D_{0+}^\alpha v \in L^2(0,T) $ and $
  v \in H^\alpha(0,T) $ with $ v(0) = 0 $; moreover, combining
  \cref{eq:lxy-11,eq:lxy-12} proves \cref{eq:Dalpha-1}. Therefore, it remains to
  prove \cref{eq:Dalpha-2}, and since $ \mathcal D(0,T) $ is dense in $
  H^\frac\alpha2(0,T) $, by \cref{lem:refer2} it suffices to show that
  \cref{eq:Dalpha-2} holds for all $ \varphi \in \mathcal D(0,T) $. To this end,
  we argue as follows. Since $ v \in H^\frac\alpha2(0,T) $,
  \cref{lem:regu_alpha} implies $ I_{0+}^{1-\frac\alpha2} v \in H^1(0,T) $ with
  $ I_{0+}^{1-\frac\alpha2} v(0) = 0 $. Also, an elementary computing yields
  \[
    I_{T-}^{1-\alpha} \varphi' =
    -I_{T-}^{1-\frac\alpha2} \D \D I_{T-}^{1-\frac\alpha2} \varphi
    \quad \text{ for all $ \varphi \in \mathcal D(0,T) $. }
  \]
  Consequently, using integration by parts gives that
  \begin{align*}
    {} &
    \left(
      \D_{0+}^\alpha v, \varphi
    \right)_{L^2(0,T)} =
    -\left(
      I_{0+}^{1-\alpha} v, \varphi'
    \right)_{L^2(0,T)} =
    -\left(
      v, I_{T-}^{1-\alpha} \varphi'
    \right)_{L^2(0,T)} \\
    ={} &
    \left(
      v, I_{T-}^{1-\frac\alpha2} \D \D I_{T-}^{1-\frac\alpha2} \varphi
    \right)_{L^2(0,T)} =
    \left(
      I_{0+}^{1-\frac\alpha2} v, \D \D I_{T-}^{1-\frac\alpha2} \varphi
    \right)_{L^2(0,T)} \\
    ={} &
    -\left(
      \D I_{0+}^{1-\frac\alpha2} v,
      \D I_{T-}^{1-\frac\alpha2} \varphi
    \right)_{L^2(0,T)} =
    \left(
      \D_{0+}^\frac\alpha2 v,
      \D_{T-}^\frac\alpha2 \varphi
    \right)_{L^2(0,T)},
  \end{align*}
  for all $ \varphi \in \mathcal D(0,T) $. This proves \cref{eq:Dalpha-2}, and
  thus concludes the proof of the lemma.
\end{proof}
\begin{rem}
  Note that \cite[Lemma 2.6]{Li;Xu;2009} provides a proof of \cref{eq:Dalpha-2}
  under the condition that $ v \in H^1(0,T) $ with $ v(0) = 0 $.
\end{rem}



\section{Regularity of a fractional ordinary equation}
\label{sec:ode}
This section is devoted to investigating the regularity properties of the
following problem: seek $ y \in C[0,T] $ with $ y(0) = y_0 \in \mathbb R $ such
that
\begin{equation}
  \label{eq:ode}
  \D_{0+}^\alpha (y-y_0) + \lambda y = g
  \quad \text{ in $ (0, T) $, }
\end{equation}
where $ \lambda $ is a positive constant, and $ g \in L^2(0,T) $. Throughout
this section we assume that $ 1 \lesssim \lambda $, and, for the sake of
rigorousness, we also understand $ y_0 $ by a function with support $ [0,T] $.

The main results of this section are the following two theorems.
\begin{thm}
  \label{thm:ode-1}
  The problem \cref{eq:ode} has a unique solution $ y \in H^\alpha(0,T) $ with $
  y(0) = y_0 $ such that
  \begin{align}
    \nm{y}_{ H^\frac \alpha2(0,T) } +
    \lambda^\frac12 \nm{y}_{L^2(0,T)}
    &\lesssim
    \lambda^{-\frac12} \nm{g}_{L^2(0,T)} + \snm{y_0},
    \label{eq:ode-1-1} \\
    \nm{y}_{ H^\alpha(0,T) } +
    \lambda^\frac12 \nm{y}_{ H^\frac \alpha2(0,T) }
    & \lesssim
    \nm{g}_{L^2(0,T)} + \lambda^\frac12 \snm{y_0},
    \label{eq:ode-1-2} \\
    \lambda \nm{y}_{L^2(0,T)}
    & \lesssim
    \nm{g}_{L^2(0,T)} + \lambda^\frac12 \snm{y_0}.
    \label{eq:ode-1-3}
  \end{align}
  Moreover, if $ g \in H^{1-\alpha}(0,T) $ then
  \begin{equation}
    \label{eq:ode-1-4}
    \nm{y}_{H^1(0,T)} \lesssim \nm{g}_{ H^{1-\alpha}(0,T) } +
    \lambda \snm{y_0}.
  \end{equation}
\end{thm}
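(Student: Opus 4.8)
The plan is to prove well-posedness by a variational (Lax--Milgram) argument, to read off the sharp $\lambda$-dependence by testing with a particular function, and to bootstrap the higher-order estimates through the equivalent Volterra equation. Writing $ u := y - y_0 $, with the datum $ y_0 $ understood as the constant function on $ (0,T) $, \cref{eq:ode} becomes $ \D_{0+}^\alpha u + \lambda u = g - \lambda y_0 $ with $ u(0) = 0 $. Since $ \alpha/2 < 1/2 $ we have $ H^{\alpha/2}(0,T) = H_0^{\alpha/2}(0,T) $, and \cref{lem:Dalpha,lem:refer2} suggest the weak problem: find $ u \in H^{\alpha/2}(0,T) $ with
\[
  \bigl( \D_{0+}^{\alpha/2} u,\, \D_{T-}^{\alpha/2} v \bigr)_{L^2(0,T)}
  + \lambda (u, v)_{L^2(0,T)}
  = (g - \lambda y_0,\, v)_{L^2(0,T)}
  \qquad \text{for all } v \in H^{\alpha/2}(0,T) .
\]
By \cref{eq:refer2-2}, $ \bigl( \D_{0+}^{\alpha/2}u, \D_{T-}^{\alpha/2}u \bigr) = \cos(\alpha\pi/2)\,\snm{u}_{H^{\alpha/2}(\mathbb R)}^2 \sim \nm{u}_{H^{\alpha/2}(0,T)}^2 $ (with $ \cos(\alpha\pi/2) > 0 $ because $ \alpha < 1 $), so the form is coercive; it is bounded by \cref{eq:refer2-1}; hence Lax--Milgram gives a unique $ u $, and we set $ y := u + y_0 $. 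Using \cref{eq:Dalpha-2} for $ I_{0+}^\alpha(g - \lambda y) $ and testing the difference $ u - I_{0+}^\alpha(g - \lambda y) $ against itself (invoking \cref{eq:refer2-2} again) forces $ y - y_0 = I_{0+}^\alpha(g - \lambda y) $, whence \cref{lem:regu_alpha} yields $ y - y_0 \in H^\alpha(0,T) $ with $ (y - y_0)(0) = 0 $; thus $ y \in C[0,T] $, $ y(0) = y_0 $, and $ y $ solves \cref{eq:ode}. Running the same computation backwards shows that any $ C[0,T] $-solution satisfies the weak problem, giving uniqueness.

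The crux is \cref{eq:ode-1-1}. The naive test $ v = u $ produces the cross term $ -\lambda y_0 \int_0^T u $ and only the loss $ \lambda^{1/2}\snm{y_0} $; this is genuinely sharp for $ u $, since when $ g = 0 $ one has $ \nm{u}_{L^2(0,T)} \sim \snm{y_0} $ with no decay, whereas $ \nm{y}_{L^2(0,T)} $ does decay. One should instead test with $ v = y $. With $ y = u + y_0 $ this gives
\[
  \cos(\alpha\pi/2)\,\snm{u}_{H^{\alpha/2}(\mathbb R)}^2
  + y_0 \bigl( \D_{0+}^{\alpha/2} u,\, \D_{T-}^{\alpha/2} 1 \bigr)_{L^2(0,T)}
  + \lambda \nm{y}_{L^2(0,T)}^2
  = (g, y)_{L^2(0,T)} .
\]
A direct computation gives $ \D_{T-}^{\alpha/2} 1 = (T - \cdot)^{-\alpha/2}/\Gamma(1 - \alpha/2) $ on $ (0,T) $, which belongs to $ L^2(0,T) $ precisely because $ \alpha/2 < 1/2 $; combined with $ \nm{\D_{0+}^{\alpha/2} u}_{L^2(0,T)} \lesssim \snm{u}_{H^{\alpha/2}(\mathbb R)} $ (from \cref{eq:refer2-1,eq:refer2-2}), the cross term is bounded by $ C \snm{y_0}\, \snm{u}_{H^{\alpha/2}(\mathbb R)} $, which is \emph{linear} in $ \snm{y_0} $ and free of $ \lambda $. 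Absorbing it into $ \cos(\alpha\pi/2)\snm{u}_{H^{\alpha/2}(\mathbb R)}^2 $ and $ (g, y) $ into $ \lambda \nm{y}_{L^2}^2 $ via Young's inequality yields $ \snm{u}_{H^{\alpha/2}(\mathbb R)}^2 + \lambda \nm{y}_{L^2(0,T)}^2 \lesssim \snm{y_0}^2 + \lambda^{-1}\nm{g}_{L^2(0,T)}^2 $; since $ \nm{y}_{H^{\alpha/2}(0,T)} \leqslant \nm{u}_{H^{\alpha/2}(0,T)} + \snm{y_0}\,\nm{1}_{H^{\alpha/2}(0,T)} \lesssim \snm{u}_{H^{\alpha/2}(\mathbb R)} + \snm{y_0} $, this is \cref{eq:ode-1-1}.

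The remaining estimates are bootstraps. From $ y - y_0 = I_{0+}^\alpha(g - \lambda y) $ and \cref{eq:regu_alpha-1} with $ \beta = \alpha $, $ \nm{y - y_0}_{H^\alpha(0,T)} \lesssim \nm{g - \lambda y}_{L^2(0,T)} \leqslant \nm{g}_{L^2(0,T)} + \lambda \nm{y}_{L^2(0,T)} \lesssim \nm{g}_{L^2(0,T)} + \lambda^{1/2}\snm{y_0} $ by \cref{eq:ode-1-1}; adding $ \snm{y_0}\nm{1}_{H^\alpha(0,T)} $ and using $ \lambda \gtrsim 1 $ gives the $ H^\alpha $ part of \cref{eq:ode-1-2}, whose $ H^{\alpha/2} $ part is just $ \lambda^{1/2} $ times \cref{eq:ode-1-1}; then \cref{eq:ode-1-3} follows from $ \lambda y = g - \D_{0+}^\alpha(y - y_0) $ together with \cref{eq:Dalpha-1}. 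For \cref{eq:ode-1-4}, let $ g \in H^{1-\alpha}(0,T) = H_0^{1-\alpha}(0,T) $. Differentiating the equation formally (using $ (y-y_0)(0) = 0 $) suggests that $ z := y' $ solves $ z + \lambda I_{0+}^\alpha z = \D_{0+}^{1-\alpha} g - \frac{\lambda y_0}{\Gamma(\alpha)} t^{\alpha - 1} $, whose right-hand side lies in $ L^2(0,T) $ with norm $ \lesssim \nm{g}_{H^{1-\alpha}(0,T)} + \lambda \snm{y_0} $ (bounding $ \nm{\D_{0+}^{1-\alpha}g}_{L^2} $ via \cref{lem:F} as in \cref{lem:refer2}, and using $ \alpha > 1/2 $ so that $ t^{\alpha - 1} \in L^2(0,T) $). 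Rather than justify this differentiation directly, I would solve the last Volterra equation for $ z \in L^2(0,T) $ — Lax--Milgram applies on $ L^2(0,T) $ since $ (I_{0+}^\alpha z, z)_{L^2} = \cos(\alpha\pi/2)\snm{I_{0+}^\alpha z}_{H^{\alpha/2}(\mathbb R)}^2 \geqslant 0 $ by \cref{lem:Dalpha,lem:refer2}, and testing with $ z $ gives $ \nm{z}_{L^2(0,T)} \lesssim \nm{g}_{H^{1-\alpha}(0,T)} + \lambda \snm{y_0} $ — and then, using $ \D_{0+}^\alpha I_{0+}^\alpha = \mathrm{id} $, $ I_{0+}^{1-\alpha} t^{\alpha - 1} = \Gamma(\alpha) $, and $ I_{0+}^{1-\alpha}\D_{0+}^{1-\alpha} g = g $, verify that $ \bar y := y_0 + I_{0+}^1 z $ solves \cref{eq:ode} with $ \bar y(0) = y_0 $. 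Uniqueness then forces $ \bar y = y $, so $ y \in H^1(0,T) $ with $ \nm{y'}_{L^2(0,T)} = \nm{z}_{L^2(0,T)} $, and combining with $ \nm{y}_{L^2(0,T)} \lesssim \nm{g}_{L^2(0,T)} + \snm{y_0} $ from \cref{eq:ode-1-1} (and $ \lambda \gtrsim 1 $) gives \cref{eq:ode-1-4}.

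I expect the main obstacle to be \cref{eq:ode-1-1}: recognizing that the weak form must be tested with the full solution $ y $ and not with $ y - y_0 $ — because it is $ \nm{y}_{L^2} $, not $ \nm{y - y_0}_{L^2} $, that decays in $ \lambda $ — and that the resulting cross term is harmless precisely because $ \D_{T-}^{\alpha/2} $ of a constant is square-integrable when $ \alpha < 1 $, so it is controlled \emph{linearly} in $ \snm{y_0} $ with no power of $ \lambda $ and can be absorbed. A secondary difficulty is keeping the variational and the pointwise formulations rigorously identified along the bootstrap, and in particular, for \cref{eq:ode-1-4}, sidestepping the circular direct differentiation of the equation by solving the differentiated Volterra equation and matching its primitive back to $ y $ via the uniqueness statement.
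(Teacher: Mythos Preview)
Your argument is correct and largely parallel to the paper's. Existence/uniqueness and \cref{eq:ode-1-1} proceed the same way: test with $y$, split off the cross term $(\D_{0+}^{\alpha/2}(y-y_0),\D_{T-}^{\alpha/2}y_0)$, and use that $\D_{T-}^{\alpha/2}1\in L^2(0,T)$ so the term is linear in $\snm{y_0}$ with no $\lambda$. You diverge from the paper in two places. For \cref{eq:ode-1-2} the paper tests the strong equation with $\D_{0+}^\alpha(y-y_0)$ and uses \cref{lem:refer2,lem:Dalpha} to extract the $H^\alpha$ and $H^{\alpha/2}$ norms simultaneously; your bootstrap from $y-y_0=I_{0+}^\alpha(g-\lambda y)$ via \cref{eq:regu_alpha-1} and the already-proved \cref{eq:ode-1-1} is shorter and avoids that second energy identity. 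For \cref{eq:ode-1-4} the paper first shows $y\in H^1$ directly from \cref{lem:regu_alpha} (using $g-\lambda y\in H^{1-\alpha}=H_0^{1-\alpha}$), then differentiates to obtain $\D y+\lambda I_{0+}^\alpha\D y=\D I_{0+}^\alpha g-\lambda\D I_{0+}^\alpha y_0$ and tests with $\D y$, invoking $(I_{0+}^\alpha\D y,\D y)_{L^2}\geqslant0$; you instead solve this same equation for an unknown $z$ via Lax--Milgram on $L^2$ (using the same positivity) and identify $y_0+I_{0+}^1z$ with $y$ by uniqueness. Both routes rest on the single nontrivial ingredient $(I_{0+}^\alpha z,z)_{L^2}\geqslant0$; the paper's is slightly more direct once $y\in H^1$ is in hand, while yours sidesteps the a~priori $H^1$-membership check at the cost of re-verifying that the primitive of $z$ solves \cref{eq:ode}.
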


\begin{thm}
  \label{thm:ode-2}
  Suppose that $ g \in H^1(0,T) $. Then the solution $ y $ to problem
  \cref{eq:ode} satisfies the following estimates:
  \begin{align}
    \nm{y - S}_{ H^{1+\frac\alpha2}(0,T) } +
    \lambda^\frac12 \nm{y}_{H^1(0,T)}
    &\lesssim
    \lambda^{-\frac12} \nm{g}_{H^1(0,T)} +
    \snm{y_0} +
    \lambda^\frac12 \snm{ g(0) - \lambda y_0 },
    \label{eq:ode-2-1} \\
    \nm{y-S}_{ H^{1+\alpha}(0,T) } +
    \lambda^\frac12 \nm{y-S}_{ H^{1+\frac\alpha2}(0,T) }
    & \lesssim
    \nm{g}_{H^1(0,T)} + \snm{y_0} +
    \lambda \snm{ g(0) - \lambda y_0 },
    \label{eq:ode-2-2} \\
    \lambda \nm{y}_{H^1(0,T)}
    & \lesssim
    \nm{g}_{H^1(0,T)} + \lambda^\frac12 \snm{y_0} +
    \lambda \snm{ g(0) - \lambda y_0 },
    \label{eq:ode-2-3}
  \end{align}
  where
  \[
    S(t) := \frac{ g(0) - \lambda y_0 }{ \Gamma(1+\alpha) } t^\alpha,
    \quad 0 < t < T.
  \]
  Moreover, if $ 0.75 < \alpha < 1 $ and $ g \in H^{2-\alpha}(0,T) $, then
  \begin{equation}
    \label{eq:ode-2-4}
    \nm{y-S}_{H^2(0,T)}  \lesssim
    \nm{g}_{H^{2-\alpha}(0,T)} + \snm{y_0} +
    \lambda \snm{g(0) - \lambda y_0}.
  \end{equation}
\end{thm}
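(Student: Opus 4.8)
The plan is to reduce \cref{thm:ode-2} to \cref{thm:ode-1} by differentiating \cref{eq:ode} once and peeling off the explicit profile $ S $. The choice of $ S $ is dictated by the two elementary identities
\[
  \D_{0+}^\alpha S = g(0) - \lambda y_0, \qquad \D_{0+}^\alpha S' = 0 \qquad \text{in } (0,T),
\]
which come from $ I_{0+}^{1-\alpha} t^\alpha = \Gamma(1+\alpha)\, t $ and $ I_{0+}^{1-\alpha} t^{\alpha-1} = \Gamma(\alpha) $ (Beta-function computations); moreover $ S'(t) = \frac{g(0)-\lambda y_0}{\Gamma(\alpha)} t^{\alpha-1} $ lies in $ L^2(0,T) $ since $ \alpha > 1/2 $, with $ \nm{S'}_{L^2(0,T)} \lesssim \snm{g(0)-\lambda y_0} $ and $ \nm{S}_{H^1(0,T)} \lesssim \snm{g(0)-\lambda y_0} $.

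Next I would set $ h := g' - \lambda S' \in L^2(0,T) $, so that $ \nm{h}_{L^2(0,T)} \lesssim \nm{g}_{H^1(0,T)} + \lambda\snm{g(0)-\lambda y_0} $, and let $ w \in H^\alpha(0,T) $ be the solution provided by \cref{thm:ode-1} of $ \D_{0+}^\alpha w + \lambda w = h $ in $ (0,T) $ with $ w(0) = 0 $. The crux is to verify that $ \widehat y := y_0 + S + I_{0+}^1 w $ solves \cref{eq:ode}. Since $ (\widehat y - y_0)(0) = 0 $ we have $ \D_{0+}^\alpha(\widehat y - y_0) = I_{0+}^{1-\alpha}(S' + w) $; from $ w \in C[0,T] $ and $ w(0) = 0 $ one gets $ I_{0+}^{1-\alpha} w(0^+) = 0 $, so the equation for $ w $ gives $ I_{0+}^{1-\alpha} w = I_{0+}^1(h - \lambda w) = g - g(0) - \lambda S - \lambda I_{0+}^1 w $; combining this with $ I_{0+}^{1-\alpha} S' = \D_{0+}^\alpha S = g(0) - \lambda y_0 $ yields $ \D_{0+}^\alpha(\widehat y - y_0) = g - \lambda\widehat y $ in $ (0,T) $. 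Since $ \widehat y \in H^\alpha(0,T) $ with $ \widehat y(0) = y_0 $ (as $ S $ and $ I_{0+}^1 w $ both lie in $ H^\alpha(0,T) $), the uniqueness in \cref{thm:ode-1} forces $ \widehat y = y $, and therefore
\[
  y - S = y_0 + I_{0+}^1 w, \qquad (y - S)' = w.
\]
Thus the derivative of the regular part $ y - S $ equals $ w $, the solution of the homogeneous-initial-value fractional ODE with datum $ h $.

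The rest is bookkeeping. From $ y - S = y_0 + I_{0+}^1 w $ and \cref{lem:basic-2} one has $ \nm{y-S}_{L^2(0,T)} \lesssim \snm{y_0} + \nm{w}_{L^2(0,T)} $, and from the standard equivalence $ \nm{v}_{H^{1+\sigma}(0,T)} \sim \nm{v}_{L^2(0,T)} + \nm{v'}_{H^\sigma(0,T)} $, $ 0 \leqslant \sigma \leqslant 1 $, one has $ \nm{y-S}_{H^{1+\sigma}(0,T)} \lesssim \snm{y_0} + \nm{w}_{L^2(0,T)} + \nm{w}_{H^\sigma(0,T)} $. Inserting the bounds \cref{eq:ode-1-1,eq:ode-1-2,eq:ode-1-3} for $ w $ (zero initial datum, forcing $ h $) into these inequalities, using in addition $ \nm{S}_{H^1(0,T)} \lesssim \snm{g(0)-\lambda y_0} $ where the estimates involve $ \nm{y}_{H^1(0,T)} $, and absorbing powers of $ \lambda $ with the standing hypothesis $ 1 \lesssim \lambda $ (so that $ \lambda^{-1} \lesssim \lambda^{-1/2} \lesssim 1 \lesssim \lambda^{1/2} \lesssim \lambda $), one obtains \cref{eq:ode-2-1,eq:ode-2-2,eq:ode-2-3}. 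For \cref{eq:ode-2-4}, under the extra assumptions $ 0.75 < \alpha < 1 $ and $ g \in H^{2-\alpha}(0,T) $ one has $ g' \in H^{1-\alpha}(0,T) $ and, since $ 1-\alpha < 1/4 $, $ t^{\alpha-1} = t^{-(1-\alpha)} \in H^{1-\alpha}(0,T) $ by the remark in \cref{sec:notation} concerning the singularity $ t^{-\beta} $; hence $ h \in H^{1-\alpha}(0,T) $ with $ \nm{h}_{H^{1-\alpha}(0,T)} \lesssim \nm{g}_{H^{2-\alpha}(0,T)} + \lambda\snm{g(0)-\lambda y_0} $, and \cref{eq:ode-1-4} applied to $ w $ gives $ \nm{w}_{H^1(0,T)} \lesssim \nm{h}_{H^{1-\alpha}(0,T)} $, which combined with the $ L^2 $-bound on $ y-S $ yields \cref{eq:ode-2-4}.

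I expect the main obstacle to be the rigorous execution of the reduction: establishing $ \D_{0+}^\alpha S = g(0)-\lambda y_0 $ and $ I_{0+}^{1-\alpha} w(0^+) = 0 $ and assembling them into the verification that $ \widehat y $ solves \cref{eq:ode}, which is exactly what lets one avoid giving any meaning to the ill-defined $ y'(0) $. Once $ (y-S)' = w $ is in hand, the estimates reduce to \cref{thm:ode-1} plus elementary manipulations; the only other delicate point is the sharp membership $ t^{\alpha-1}\in H^{1-\alpha}(0,T) $, which is precisely why the restriction $ \alpha > 0.75 $ is needed in \cref{eq:ode-2-4}.
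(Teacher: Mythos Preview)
Your proposal is correct and follows essentially the same approach as the paper: both introduce the auxiliary solution $w$ (the paper calls it $z$) of $\D_{0+}^\alpha w + \lambda w = g' - \lambda S'$ with $w(0)=0$, verify that $y = y_0 + S + I_{0+}^1 w$, and then read off the estimates from \cref{thm:ode-1} together with $\nm{S}_{H^1(0,T)} \lesssim \snm{g(0)-\lambda y_0}$. Your write-up in fact makes explicit two points the paper leaves implicit---the identity $I_{0+}^{1-\alpha}w(0^+)=0$ used in the verification, and the role of the membership $t^{\alpha-1}\in H^{1-\alpha}(0,T)$ (hence the threshold $\alpha>0.75$) in \cref{eq:ode-2-4}; one small bookkeeping point to watch is that when bounding $\lambda^{1/2}\nm{y}_{H^1(0,T)}$ in \cref{eq:ode-2-1} you should control $\nm{y}_{L^2(0,T)}$ directly via \cref{eq:ode-1-1} rather than via $\nm{y-S}_{L^2(0,T)}$, to avoid picking up an unwanted factor $\lambda^{1/2}\snm{y_0}$.
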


\noindent { \bf Proof of \cref{thm:ode-1} }
  Let us first show that problem \cref{eq:ode} has a unique solution $ y \in
  H^\alpha(0,T) $ with $ y(0) = y_0 $. By \cref{lem:refer2}, and the famous
  Riesz representation theorem, there exists a unique $ \widetilde y \in
  H^\frac\alpha2(0,T) $ such that
  \begin{equation}
    \label{eq:ode-1-10}
    \left(
      \D_{0+}^\frac\alpha2 \widetilde y,
      \D_{T-}^\frac\alpha2 z
    \right)_{L^2(0,T)} +
    \lambda \left( \widetilde y, z \right)_{L^2(0,T)} =
    \left( g, z \right)_{L^2(0,T)} -
    \lambda (y_0, z)_{L^2(0,T)}
  \end{equation}
  for all $ z \in H^\frac\alpha2(0,T) $. Since \cref{lem:regu_alpha} implies $
  I_{0+}^{1-\frac\alpha2} \widetilde y(0) = 0 $, using integration by parts
  gives that
  \begin{align*}
    {} &
    \left(
      \D_{0+}^\frac\alpha2 \widetilde y,
      \D_{T-}^\frac\alpha2 z
    \right)_{L^2(0,T)} =
    -\left(
      \D I_{0+}^{1-\frac\alpha2} \widetilde y,
      \D I_{T-}^{1-\frac\alpha2} \varphi
    \right)_{L^2(0,T)} \\
    ={} &
    \left(
      I_{0+}^{1-\frac\alpha2} \widetilde y,
      I_{T-}^{1-\frac\alpha2} \varphi''
    \right)_{L^2(0,T)} =
    \left(
      \widetilde y,
      I_{T-}^{2-\alpha} \varphi''
    \right)_{L^2(0,T)} \\
    ={} &
    -\left(
      \widetilde y, I_{T-}^{1-\alpha} \varphi'
    \right)_{L^2(0,T)} =
    -\left(
      I_{0+}^{1-\alpha} \widetilde y,
      \varphi'
    \right)_{L^2(0,T)} \\
    ={} &
    \dual{
      \D_{0+}^\alpha \widetilde y, \varphi
    } \\
  \end{align*}
  for all $ \varphi \in \mathcal D(0,T) $, where $ \dual{\cdot,\cdot} $ denotes
  the duality pairing between $ \mathcal D'(\mathbb R) $ and $ \mathcal
  D(\mathbb R) $. From \cref{eq:ode-1-10} it follows that
  \[
    \D_{0+}^\alpha \widetilde y = g - \lambda (y_0 + \widetilde y)
    \quad \text{ in $ (0,T) $, }
  \]
  and then, \cref{lem:Dalpha} implies $ \widetilde y \in H^\alpha(0,T) $ with $
  \widetilde y(0) = 0 $. Obviously, $ y:= y_0 + \widetilde y $ in $ (0,T) $ is
  an $ H^\alpha(0,T) $-solution of \cref{eq:ode}, and, by
  \cref{lem:Dalpha,lem:refer2}, it is clear that this $ H^\alpha(0,T) $-solution
  is unique.

  Then let us show \cref{eq:ode-1-1}. Multiplying both sides of \cref{eq:ode} by
  $ y $, and integrating in $ (0,T) $, we obtain
  \[
    \left(
      \D_{0+}^\alpha(y-y_0), y
    \right)_{L^2(0,T)} +
    \lambda \nm{y}_{L^2(0,T)} =
    \left( g, y \right)_{L^2(0,T)},
  \]
  so that
  \[
    \left(
      \D_{0+}^\alpha(y-y_0), y-y_0
    \right)_{L^2(0,T)} +
    \lambda \nm{y}_{L^2(0,T)} =
    \left( g, y \right)_{L^2(0,T)} -
    \left(
      \D_{0+}^\alpha (y-y_0), y_0
    \right)_{L^2(0,T)}.
  \]
  From \cref{lem:Dalpha} it follows
  \[
    \left(
      \D_{0+}^\frac\alpha2(y-y_0), \D_{T-}^\frac\alpha2 (y-y_0)
    \right)_{L^2(0,T)} +
    \lambda \nm{y}_{L^2(0,T)} =
    \left( g, y \right)_{L^2(0,T)} -
    \left(
      \D_{0+}^\frac\alpha2 (y-y_0), \D_{T-}^\frac\alpha2 y_0
    \right)_{L^2(0,T)}.
  \]
  Therefore, using \cref{lem:refer2} and the Cauchy's inequality with $ \epsilon
  $ gives
  \[
    \nm{y-y_0}_{ H^\frac \alpha2(0,T) }^2 +
    \lambda \nm{y}_{L^2(0,T)}^2 \lesssim
    \lambda^{-1} \nm{g}_{L^2(0,T)}^2 + y_0^2,
  \]
  which, together with the estimate
  \[
    \nm{y_0}_{H^\frac\alpha2(0,T)} \lesssim  \snm{y_0},
  \]
  yields \cref{eq:ode-1-1}.

  Next, let us show \cref{eq:ode-1-2}. Multiplying both sides of \cref{eq:ode}
  by $ \D_{0+}^\alpha(y-y_0) $, and integrating in $ (0,T) $ yield
  \[
    \nm{ \D_{0+}^\alpha(y-y_0) }_{L^2(0,T)}^2 +
    \lambda \left(
      y, \D_{0+}^\alpha( y-y_0 )
    \right)_{L^2(0,T)} =
    \left(
      g, \D_{0+}^\alpha( y-y_0 )
    \right)_{L^2(0,T)},
  \]
  so that
  \[
    \nm{ \D_{0+}^\alpha(y-y_0) }_{L^2(0,T)}^2 +
    \lambda \left(
      y-y_0, \D_{0+}^\alpha( y-y_0 )
    \right)_{L^2(0,T)} =
    \left(
      g, \D_{0+}^\alpha( y-y_0 )
    \right)_{L^2(0,T)} -
    \lambda \left(
      y_0, \D_{0+}^\alpha ( y-y_0 )
    \right)_{L^2(0,T)}.
  \]
  From \cref{lem:refer2,lem:Dalpha} it follows
  \[
    \nm{y-y_0}_{H^\alpha(0,T)}^2 +
    \lambda \nm{ y-y_0 }_{H^\frac\alpha2(0,T)}^2
    \lesssim
    \nm{g}_{L^2(0,T)} \nm{y-y_0}_{H^\alpha(0,T)} +
    \lambda \nm{y-y_0}_{H^\frac\alpha2(0,T)} \snm{y_0},
  \]
  hence the Cauchy's inequality with $ \epsilon $ implies
  \begin{equation}
    \label{eq:ode-1-2-1}
    \nm{y-y_0}_{H^\alpha(0,T)}^2 +
    \lambda \nm{y-y_0}_{ H^\frac \alpha2(0,T) }^2
    \lesssim \nm{g}_{L^2(0,T)}^2 + \lambda y_0^2.
  \end{equation}
  Therefore, \cref{eq:ode-1-2} follows from the following estimates:
  \begin{align*}
    \nm{y}_{ H^\alpha(0,T) } & \lesssim
    \nm{y-y_0}_{ H^\alpha(0,T) } + \snm{y_0}, \\
    \nm{y}_{ H^\frac \alpha2(0,T) } & \lesssim
    \nm{y-y_0}_{ H^\frac \alpha2(0,T) } + \snm{y_0}.
  \end{align*}

  Now, let us show \cref{eq:ode-1-3}. Since \cref{eq:ode} implies
  \[
    \lambda y = g - \D_{0+}^\alpha (y-y_0)
    \quad \text{ in $ (0,T) $, }
  \]
  by \cref{lem:Dalpha} we obtain
  \[
    \lambda^2 \nm{y}_{L^2(0,T)}^2 \lesssim
    \nm{g}_{L^2(0,T)}^2 + \nm{ y-y_0 }_{H^\alpha(0,T)}^2.
  \]
  Then \cref{eq:ode-1-3} is a direct conclusion of \cref{eq:ode-1-2-1}.

  Finally, let us show \cref{eq:ode-1-4}. By \cref{eq:ode} a simple computing
  gives
  \[
    y = y_0 + I_{0+}^\alpha (g - \lambda y)
    \quad \text{ in $ (0, T) $, }
  \]
  Since $ y \in H^\alpha(0,T) \subset H^{1-\alpha}(0,T) $, from
  \cref{lem:regu_alpha} it follows $ y \in H^1(0,T) $, and so
  \[
    \D y = \D I_{0+}^\alpha (g - \lambda y)
    \quad \text{ in $ (0,T) $. }
  \]
  which implies
  \[
    \D y + \lambda \D I_{0+}^\alpha(y-y_0) =
    \D I_{0+}^\alpha g - \D I_{0+}^\alpha y_0
    \quad \text{ in $ (0,T) $. }
  \]
  Since $ y \in H^1(0,T) $ with $ y(0) = y_0 $ implies
  \[
    \D I_{0+}^\alpha (y-y_0) = I_{0+}^\alpha \D y
    \quad \text{ in $ (0,T) $, }
  \]
  it follows
  \[
    \D y + \lambda I_{0+}^\alpha \D y = \D I_{0+}^\alpha g -
    \lambda \D I_{0+}^\alpha y_0
    \quad \text{ in $ (0,T) $. }
  \]
  Multiplying both sides of the above equation by $ \D y $, and integrating in $
  (0,T) $, we obtain
  \[
    \nm{\D y}_{L^2(0,T)}^2 +
    \lambda \left( I_{0+}^\alpha \D y, \D y \right)_{L^2(0,T)} =
    \left( \D I_{0+}^\alpha g, \D y \right)_{L^2(0,T)} -
    \lambda \left( \D I_{0+}^\alpha y_0, \D y \right)_{L^2(0,T)},
  \]
  so that, using \cref{lem:regu_alpha} and the Cauchy's inequality with $
  \epsilon $ gives
  \[
    \nm{\D y}_{L^2(0,T)}^2 +
    \lambda \left( I_{0+}^\alpha \D y, \D y \right)_{L^2(0,T)} \lesssim
    \nm{g}_{ H^{1-\alpha}(0,T) }^2 + \lambda^2 y_0^2.
  \]
  Since \cref{eq:ode-1-1} implies
  \[
    \nm{y}_{L^2(0,T)} \lesssim \lambda^{-1} \nm{g}_{L^2(0,T)} +
    \lambda^{-\frac12} \snm{y_0},
  \]
  to prove \cref{eq:ode-1-4} it suffices to to show that
  \begin{equation}
    \label{eq:ode-1-4-1}
    \left( I_{0+}^\alpha \D y, \D  y \right)_{L^2(0,T)}
    \geqslant 0.
  \end{equation}
  To this end, let us define
  \[
    v(t) :=
    \begin{cases}
      \D y(t) & \text{ if $ 0 < t < T $, } \\
      0       & \text{ otherwise. }
    \end{cases}
  \]
  Since $ \frac\alpha2 < \frac12 $, it is easy to verify that $
  I_{0+}^\frac\alpha2 v$, $ I_{T-}^\frac\alpha2 v \in L^2(\mathbb R) $, and
  then, using \cref{lem:F} and the famous Parseval's theorem yields
  \[
    \left(
      I_{0+}^\alpha v, v
    \right)_{L^2(0,T)} =
    \left(
      I_{0+}^\frac\alpha2 v, I_{T-}^\frac\alpha2 v
    \right)_{L^2(0,T)} =
    \left(
      I_{0+}^\frac\alpha2 v, I_{T-}^\frac\alpha2 v
    \right)_{L^2(\mathbb R)} =
    \cos \left( \frac{\alpha\pi}2 \right)
    \int_{\mathbb R} \snm{w}^{-\alpha} \snm{\mathcal F v(\xi)}^2
    \, \mathrm{d}w \geqslant 0,
  \]
  which proves \cref{eq:ode-1-4-1}. This concludes the proof of
  \cref{eq:ode-1-4}, and thus the proof of \cref{thm:ode-1}.
\hfill\ensuremath{\blacksquare}

\noindent
{ \bf Proof of \cref{thm:ode-2}. }
  By \cref{thm:ode-1} we see that there exists uniquely $ z \in H^\alpha(0,T) $
  with $ z(0) = 0 $ such that
  \[
    \D_{0+}^\alpha z + \lambda z = \D g - \lambda S'
    \quad \text{ in $ (0,T) $. }
  \]
  Moreover,
  \begin{align*}
    \nm{z}_{ H^\frac \alpha2(0,T) } +
    \lambda^\frac12 \nm{z}_{L^2(0,T)}
    &\lesssim
    \lambda^{-\frac12} \nm{\D g}_{L^2(0,T)} +
    \lambda^\frac12 \snm{ g(0) - \lambda y_0 }, \\
    \nm{z}_{ H^\alpha(0,T) } +
    \lambda^\frac12 \nm{z}_{ H^\frac \alpha2(0,T) }
    & \lesssim
    \nm{\D g}_{L^2(0,T)} +
    \lambda \snm{ g(0) - \lambda y_0 }, \\
    \lambda \nm{z}_{L^2(0,T)}
    & \lesssim
    \nm{\D g}_{L^2(0,T)} +
    \lambda \snm{ g(0) - \lambda y_0 },
  \end{align*}
  and if $ 0.75 < \alpha < 1 $ and $ g \in H^{2-\alpha}(0,T) $, then
  \[
    \nm{z}_{H^1(0,T)} \lesssim
    \nm{\D g}_{H^{1-\alpha}(0,T)} + \lambda \snm{g(0) - \lambda y_0},
  \]
  since $ \D g - \lambda S' \in H^{1-\alpha}(0,T) $. Putting
  \begin{equation}
    \label{eq:100}
    y := y_0 + S + I_{0+}z  \quad \text{ in $ (0,T) $, }
  \end{equation}
  if $ y $ is the solution of problem \cref{eq:ode}, then, by \cref{thm:ode-1}
  and the estimate
  \[
    \nm{S}_{H^1(0,T)} \lesssim \snm{g(0) - \lambda y_0},
  \]
  some simple manipulation yields
  \cref{eq:ode-2-1,eq:ode-2-2,eq:ode-2-3,eq:ode-2-4}. Therefore, to complete the
  proof of this theorem, it remains to show that \cref{eq:100} is the solution
  of problem \cref{eq:ode}.

  To do so, note that the definition of $ z $ implies
  \[
    I_{0+} \D_{0+}^\alpha z + \lambda I_{0+}z = g - g(0) - \lambda S
    \quad \text{ in $ (0, T) $. }
  \]
  Also, from the fact $ z \in H^\alpha(0,T) $ with $ z(0) = 0 $ it follows
  \[
    I_{0+} \D_{0+}^\alpha z = \D_{0+}^\alpha I_{0+} z
    \quad \text{ in $ (0, T) $. }
  \]
  Consequently,
  \[
    \D_{0+}^\alpha I_{0+}z + \lambda I_{0+}z = g - g(0) - \lambda S
    \quad \text{ in $ (0,T) $, }
  \]
  and so
  \[
    \D_{0+}^\alpha (I_{0+}z + S) + \lambda (I_{0+}z + S + y_0) -
    \D_{0+}^\alpha S + g(0) - \lambda y_0 = g
    \quad \text{ in $ (0,T) $. }
  \]
  Since a direct computing gives
  \[
    -\D_{0+}^\alpha S + g(0) - \lambda y_0 = 0
    \quad \text{ in $ (0,T) $, }
  \]
  it is evident that \cref{eq:100} is the solution of \cref{eq:ode} indeed. This
  completes the proof of \cref{thm:ode-2}.
\hfill\ensuremath{\blacksquare}

\section{Main results}
\label{sec:main}
In this section we shall employ the results developed in \cref{sec:ode} to
analyze the regularity properties of problem \cref{eq:model}. Let us start by
introducing some notation and conventions. For each $ v \in L^2(Q_T) $, we can
naturally regard it as an $ L^2(\Omega) $-valued function with domain $ (0,T) $,
an element of $ L^2(0,T; L^2(\Omega)) $, and, for convenience, we also use $ v $
to denote this $ L^2(\Omega) $-valued function. It is well known that, there
exists, in $ H_0^1(\Omega) \cap H^2(\Omega) $, an orthonormal basis $ \{ \phi_k
|\ k \in \mathbb N \} $ of $ L^2(\Omega) $, and a nondecreasing sequence $ \{
\lambda_k > 0 |\ k \in \mathbb N \} $ such that
\[
  -\Delta \phi_k = \lambda_k \phi_k
  \quad \text{ in $ \Omega $, for all $ k \in \mathbb N $. }
\]
Meanwhile, $ \{ \lambda_k^{-1/2} \phi_k|\ k \in \mathbb N \} $ is an orthonormal
basis of $ H_0^1(\Omega) $ equipped with the inner product $ (\nabla\cdot,
\nabla\cdot)_{L^2(\Omega)} $. For each $ k \in \mathbb N $, define $ c_k \in
C[0,T] $ with $ c_k(0) := (u_0, \phi)_{L^2(\Omega)} $ by
\[
  -\D_{0+}^\alpha \big( c_k-c_k(0) \big) + \lambda_k c_k = f_k
  \quad \text{ in $ (0,T) $, }
\]
where
\[
  f_k(t) := \int_\Omega f(x,t) \phi_k(x) \, \mathrm{d}x,
  \quad 0 < t < T.
\]
Finally, define
\begin{equation}
  \label{eq:u}
  u(t) := \sum_{k=0}^\infty c_k(t) \phi_k,
  \quad 0 < t < T.
\end{equation}

By \cref{thm:ode-1,thm:ode-2}, we readily obtain the following estimates on the
above $ u $.
\begin{thm}
  \label{thm:esti-u}
  The $ u $ defined by \cref{eq:u} satisfies the following estimates:
  \begin{itemize}
    \item If $ f \in L^2(0,T; L^2(\Omega)) $ and $ u_0 \in H_0^1(\Omega) $, then
      \begin{equation}
        \label{eq:esti-u-1}
        \nm{u}_{H^\alpha(0,T; L^2(\Omega))} +
        \nm{u}_{H^\frac\alpha2(0,T; H_0^1(\Omega))} +
        \nm{u}_{L^2(0,T; H^2(\Omega))}
        \lesssim
        \nm{f}_{L^2(0,T; L^2(\Omega))} +
        \nm{u_0}_{H_0^1(\Omega)}.
      \end{equation}
    \item If $ f \in H^{1-\alpha}(0,T; L^2(\Omega)) $ and $ u_0 \in
      H_0^1(\Omega) \cap H^2(\Omega) $, then
      \begin{equation}
        \label{eq:esti-u-2}
        \nm{u}_{H^1(0,T; L^2(\Omega))} \lesssim
        \nm{f}_{H^{1-\alpha}(0,T; L^2(\Omega))} +
        \nm{u_0}_{H^2(\Omega)}.
      \end{equation}
    \item If $ f \in H^1(0,T; L^2(\Omega)) $ with $ f(0) \in H^2(\Omega) $ and $
      u_0 \in H_0^1(\Omega) \cap H^4(\Omega) $, then
      \begin{align}
        {} &
        \nm{u-S}_{H^{1+\alpha}(0,T; L^2(\Omega))} +
        \nm{u-S}_{H^{1+\frac\alpha2}(0,T; H_0^1(\Omega))} +
        \nm{u}_{H^1(0,T;H^2(\Omega))} \notag \\
        \lesssim{} &
        \nm{f}_{H^1(0,T; L^2(\Omega))} + \nm{f(0)}_{H^2(\Omega)} +
        \nm{u_0}_{H^4(\Omega)}. \label{eq:esti-u-3}
      \end{align}
    \item If $ 0.75 < \alpha < 1 $, $ f \in H^{2-\alpha}(0,T; L^2(\Omega)) $
      with $ f(0) \in H^2(\Omega) $, and $ u_0 \in H_0^1(\Omega) \cap
      H^4(\Omega) $, then
      \begin{equation}
        \label{eq:esti-u-4}
        \nm{u-S}_{H^2(0,T; L^2(\Omega))} \lesssim
        \nm{f}_{H^{2-\alpha}(0,T; L^2(\Omega))} +
        \nm{f(0)}_{H^2(\Omega)} + \nm{u_0}_{H^4(\Omega)}.
      \end{equation}
  \end{itemize}
  Above,
  \[
    S(t) := \sum_{k=0}^\infty \frac{ f_k(0) - \lambda_k c_k(0) }
    { \Gamma(1+\alpha) } t^\alpha \phi_k,
    \quad 0 < t < T.
  \]
\end{thm}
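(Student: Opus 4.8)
The plan is to reduce \cref{thm:esti-u} to the scalar estimates of \cref{thm:ode-1,thm:ode-2}, applied to each Fourier mode $c_k$, and then to sum over $k$ after converting the $X$-valued Sobolev norms appearing in the statement into weighted $\ell^2$-sums of scalar Sobolev norms of the $c_k$. First I would set up the dictionary relating the two. Since $\{\phi_k\}$ is an orthonormal basis of $L^2(\Omega)$, $\{\lambda_k^{-1/2}\phi_k\}$ is an orthonormal basis of $H_0^1(\Omega)$, and elliptic regularity on the $C^2$ domain $\Omega$ gives $\nm{v}_{H^2(\Omega)} \sim \nm{\Delta v}_{L^2(\Omega)}$ on $H_0^1(\Omega)\cap H^2(\Omega)$, one has for every $\beta \geqslant 0$
\[
  \nm{u}_{H^\beta(0,T;L^2(\Omega))}^2 = \sum_k \nm{c_k}_{H^\beta(0,T)}^2,\quad
  \nm{u}_{H^\beta(0,T;H_0^1(\Omega))}^2 = \sum_k \lambda_k \nm{c_k}_{H^\beta(0,T)}^2,\quad
  \nm{u}_{H^\beta(0,T;H^2(\Omega))}^2 \sim \sum_k \lambda_k^2 \nm{c_k}_{H^\beta(0,T)}^2,
\]
the same relations with $c_k$ replaced by $c_k - S_k$ and $u$ by $u - S$, where $S_k(t) := \frac{f_k(0)-\lambda_k c_k(0)}{\Gamma(1+\alpha)}t^\alpha$, together with $\nm{u_0}_{H_0^1(\Omega)}^2 = \sum_k \lambda_k c_k(0)^2$, $\sum_k \lambda_k^2 c_k(0)^2 \lesssim \nm{u_0}_{H^2(\Omega)}^2$, $\sum_k \lambda_k^4 c_k(0)^2 \lesssim \nm{u_0}_{H^4(\Omega)}^2$, $\sum_k \lambda_k^2 f_k(0)^2 \lesssim \nm{f(0)}_{H^2(\Omega)}^2$ (the last three via elliptic regularity for the Dirichlet Laplacian, together with the natural compatibility conditions implicit in the statement), and $\nm{f}_{H^\beta(0,T;L^2(\Omega))}^2 = \sum_k \nm{f_k}_{H^\beta(0,T)}^2$.

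Next I would observe that for every $k$ the coefficient $c_k$ is exactly the solution of problem \cref{eq:ode} with $\lambda = \lambda_k$, $y_0 = c_k(0) = (u_0,\phi_k)_{L^2(\Omega)}$ and $g = f_k \in L^2(0,T)$; since $\lambda_k \geqslant \lambda_1 > 0$, the standing hypothesis $1 \lesssim \lambda$ of \cref{sec:ode} holds uniformly in $k$, so \cref{thm:ode-1,thm:ode-2} apply to each $c_k$ with constants independent of $k$. For \cref{eq:esti-u-1} I would square \cref{eq:ode-1-2,eq:ode-1-3} (with the above data), obtaining
\[
  \nm{c_k}_{H^\alpha(0,T)}^2 + \lambda_k \nm{c_k}_{H^{\alpha/2}(0,T)}^2 + \lambda_k^2 \nm{c_k}_{L^2(0,T)}^2 \lesssim \nm{f_k}_{L^2(0,T)}^2 + \lambda_k c_k(0)^2 ,
\]
then summing over $k$ and using the dictionary; the resulting finiteness also shows that the series \cref{eq:u} converges in each of the three spaces on the left of \cref{eq:esti-u-1}, so $u$ is well defined. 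For \cref{eq:esti-u-2} I would square \cref{eq:ode-1-4} (legitimate because $f_k \in H^{1-\alpha}(0,T)$ when $f \in H^{1-\alpha}(0,T;L^2(\Omega))$) and sum. For \cref{eq:esti-u-3} I would square \cref{eq:ode-2-2,eq:ode-2-3}, sum over $k$, and control $\sum_k \lambda_k^2 \snm{f_k(0)-\lambda_k c_k(0)}^2 \lesssim \nm{f(0)}_{H^2(\Omega)}^2 + \nm{u_0}_{H^4(\Omega)}^2$ via the dictionary. For \cref{eq:esti-u-4}, under $0.75 < \alpha < 1$ and $f \in H^{2-\alpha}(0,T;L^2(\Omega))$ (so $f_k \in H^{2-\alpha}(0,T)$), I would square \cref{eq:ode-2-4} and sum in exactly the same way.

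This summation is entirely routine; the only real work lies in the dictionary, i.e.\ in identifying the $H^2(\Omega)$- and $H^4(\Omega)$-type norms of $u_0$ (and the $H^2(\Omega)$-norm of $f(0)$) with the weighted sums $\sum_k \lambda_k^{2m}\snm{(\cdot,\phi_k)_{L^2(\Omega)}}^2$ through elliptic regularity for the Dirichlet Laplacian on the $C^2$ domain $\Omega$, and in checking that the powers of $\lambda_k$ delivered by \cref{thm:ode-1,thm:ode-2} line up precisely with the weights attached to the target norms on the left-hand sides. I expect this bookkeeping, rather than any new estimate, to be the main (and essentially only) obstacle.
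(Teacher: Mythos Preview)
Your proposal is correct and follows essentially the same route as the paper: the paper's proof also reduces everything to the scalar estimates of \cref{thm:ode-1,thm:ode-2} applied mode-by-mode, sums over $k$, and invokes the elliptic regularity estimate $\nm{v}_{H^2(\Omega)} \lesssim \nm{\Delta v}_{L^2(\Omega)}$ for the $H^2(\Omega)$ term (it writes out only the case of \cref{eq:esti-u-1} and declares the rest ``straightforward''). Your explicit ``dictionary'' and your remark about the implicit compatibility conditions needed for the $H^4(\Omega)$- and $H^2(\Omega)$-weighted sums make transparent points the paper leaves tacit, but the argument is the same.
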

\begin{proof}
  Since by \cref{thm:ode-1,thm:ode-2}, the proofs of
  \cref{eq:esti-u-1,eq:esti-u-2,eq:esti-u-3,eq:esti-u-4} are straightforward,
  below we only prove \cref{eq:esti-u-1}. To this end, note that $ f \in
  L^2(0,T; L^2(\Omega)) $ and $ u_0 \in H_0^1(\Omega) $ imply
  \begin{align*}
    \sum_{k=0}^\infty \lambda_k c_k(0)^2 &=
    \nm{u_0}_{H_0^1(\Omega)}^2, \\
    \sum_{k=0}^\infty \nm{f_k}_{L^2(0,T)}^2 &=
    \nm{f}_{ L^2( 0,T; L^2(\Omega) ) }^2.
  \end{align*}
  From \cref{thm:ode-1} it follows
  \begin{equation}
    \label{eq:lxy-1}
    \sum_{k=0}^\infty \left(
      \nm{c_k}_{H^\alpha(0,T)}^2 +
      \lambda_k \nm{c_k}_{H^\frac\alpha2(0,T)}^2 +
      \lambda_k^2 \nm{c_k}_{L^2(0,T)}^2
    \right) \lesssim
    \nm{f}_{L^2(0,T; L^2(\Omega))}^2 +
    \nm{u_0}_{H_0^1(\Omega)}^2.
  \end{equation}
  Obviously, the above estimate implies
  \[
    \nm{u}_{ H^\alpha ( 0,T; L^2(\Omega) ) } +
    \nm{u}_{ H^\frac \alpha2( 0,T; H_0^1(\Omega) ) } \lesssim
    \nm{f}_{L^2(0,T; L^2(\Omega))} +
    \nm{u_0}_{H_0^1(\Omega)},
  \]
  and therefore, it remains to prove that
  \begin{equation}
    \label{eq:lxy-2}
    \nm{u}_{L^2(0,T; H^2(\Omega))} \lesssim
    \nm{f}_{L^2(0,T; L^2(\Omega))} +
    \nm{u_0}_{H_0^1(\Omega)}.
  \end{equation}
  To do so, using the standard estimate that $ \nm{v}_{H^2(\Omega)} \lesssim
  \nm{\Delta v}_{L^2(\Omega)} $ for all $ v \in H_0^1(\Omega) $ such that $
  \Delta v \in L^2(\Omega) $, we obtain
  \begin{align*}
    {} &
    \nm{u}_{L^2(0,T; H^2(\Omega))} =
    \left(
      \int_0^T \nm{u(t)}_{H^2(\Omega)}^2 \, \mathrm{d}t
    \right)^\frac12 \lesssim
    \left(
      \nm{\Delta u(t)}_{L^2(\Omega)}^2 \, \mathrm{d}t
    \right)^\frac12 \\
    ={} &
    \left(
      \int_0^T \sum_{k=0}^\infty \lambda_k^2 c_k(t)^2 \, \mathrm{d}t
    \right)^\frac12 =
    \left(
      \sum_{k=0}^\infty \lambda_k^2 \nm{c_k}_{L^2(0,T)}^2
    \right)^\frac12,
  \end{align*}
  which, together with \cref{eq:lxy-1}, yields \cref{eq:lxy-2}. This prove the
  estimate \cref{eq:esti-u-1}, and concludes the proof of the theorem.
\end{proof}

Finally, let us show in what sense $ u $, given by \cref{eq:u}, is a solution to
problem \cref{eq:model}.
\begin{thm}
  \label{thm:u}
  Suppose that $ f \in L^2(0,T; L^2(\Omega)) $ and $ u_0 \in H_0^1(\Omega) $.
  Then $ u $, defined by \cref{eq:u}, satisfies that $ u \in C([0,T];
  L^2(\Omega)) $ with $ u(0) = u_0 $, and that
  \begin{equation}
    \label{eq:weak-sol}
    \left(
      \partial_t^\alpha (u-u_0),
      \varphi
    \right)_{L^2(Q_T)} +
    \left( \nabla u, \nabla \varphi \right)_{L^2(Q_T)} =
    \left( f, \varphi \right)_{L^2(Q_T)},
  \end{equation}
  for all $ \varphi \in L^2(0,T; H_0^1(\Omega)) $.
\end{thm}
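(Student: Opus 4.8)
The plan is to read off the function‑space membership of $u$ from \cref{thm:esti-u}, to identify $\partial_t^\alpha(u-u_0)$ through the fractional ODEs satisfied by the coefficients $c_k$, and then to verify \cref{eq:weak-sol} by expanding an arbitrary test function in the eigenbasis $\{\phi_k\}$ and manipulating the resulting scalar series termwise. For the first step, estimate \cref{eq:esti-u-1} (equivalently \cref{eq:lxy-1}) shows that the partial sums of \cref{eq:u} are Cauchy in $H^\alpha(0,T;L^2(\Omega))\cap L^2(0,T;H^2(\Omega))$, so $u$ lies in this intersection. Since $\alpha>1/2$, the scalar embedding $H^\alpha(0,T)\hookrightarrow C[0,T]$ upgrades, through the definition of the vector‑valued space, to $H^\alpha(0,T;L^2(\Omega))\hookrightarrow C([0,T];L^2(\Omega))$; hence $u\in C([0,T];L^2(\Omega))$ and the partial sums $\sum_{k=0}^{N}c_k(\cdot)\phi_k$ converge to $u$ uniformly on $[0,T]$ in $L^2(\Omega)$. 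Evaluating at $t=0$, and using $c_k(0)=(u_0,\phi_k)_{L^2(\Omega)}$ together with the fact that $\{\phi_k\}$ is an orthonormal basis of $L^2(\Omega)$ and $u_0\in L^2(\Omega)$, gives $\sum_{k=0}^{N}c_k(0)\phi_k\to u_0$ in $L^2(\Omega)$, so that $u(0)=u_0$.

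Next I would identify $\partial_t^\alpha(u-u_0)$. For each $k$, \cref{thm:ode-1} gives $c_k\in H^\alpha(0,T)$, hence $c_k-c_k(0)\in H^\alpha(0,T)$ with $(c_k-c_k(0))(0)=0$, and the equation for $c_k$ yields $\D_{0+}^\alpha(c_k-c_k(0))=\lambda_k c_k-f_k\in L^2(0,T)$, with $\sum_k\nm{\D_{0+}^\alpha(c_k-c_k(0))}_{L^2(0,T)}^2\lesssim\sum_k\big(\lambda_k^2\nm{c_k}_{L^2(0,T)}^2+\nm{f_k}_{L^2(0,T)}^2\big)<\infty$ by \cref{eq:lxy-1}. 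Writing $v:=u-u_0=\sum_k(c_k-c_k(0))\phi_k$, a convergent series in $H^\alpha(0,T;L^2(\Omega))$ with $v(0)=0$, and using that $I_{0+}^{1-\alpha}$ and $\partial_t$ commute with taking the $L^2(\Omega)$ inner product against each $\phi_k$ (so that \cref{lem:Dalpha} applies componentwise), I would conclude $\partial_t^\alpha v=\partial_t I_{0+}^{1-\alpha}v\in L^2(Q_T)$ and $(\partial_t^\alpha v(\cdot),\phi_k)_{L^2(\Omega)}=\D_{0+}^\alpha(c_k-c_k(0))$ in $L^2(0,T)$ for every $k$.

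Finally, I would fix $\varphi\in L^2(0,T;H_0^1(\Omega))$ and set $\varphi_k(t):=(\varphi(t),\phi_k)_{L^2(\Omega)}$, so that $\sum_k\lambda_k\nm{\varphi_k}_{L^2(0,T)}^2=\nm{\varphi}_{L^2(0,T;H_0^1(\Omega))}^2$ and $\sum_k\nm{\varphi_k}_{L^2(0,T)}^2=\nm{\varphi}_{L^2(Q_T)}^2$. Using Parseval's identity in $L^2(\Omega)$ at each $t$, Fubini's theorem, and the identity $(\nabla u(t),\nabla\varphi(t))_{L^2(\Omega)}=\sum_k\lambda_k c_k(t)\varphi_k(t)$ (which follows from $-\Delta\phi_k=\lambda_k\phi_k$ and $\varphi(t)\in H_0^1(\Omega)$ via $(\nabla u(t),\nabla\varphi(t))_{L^2(\Omega)}=(-\Delta u(t),\varphi(t))_{L^2(\Omega)}$ for a.e.\ $t$), the left‑hand side of \cref{eq:weak-sol} becomes
\[
  \sum_{k=0}^{\infty}\left(\D_{0+}^\alpha(c_k-c_k(0)),\varphi_k\right)_{L^2(0,T)}+\sum_{k=0}^{\infty}\lambda_k\left(c_k,\varphi_k\right)_{L^2(0,T)} .
\]
Each interchange of $\sum_k$ with the $t$‑integration is legitimate because the Cauchy--Schwarz inequality and \cref{eq:lxy-1} make $\sum_k\nm{\D_{0+}^\alpha(c_k-c_k(0))}_{L^2(0,T)}\nm{\varphi_k}_{L^2(0,T)}$, $\sum_k\lambda_k\nm{c_k}_{L^2(0,T)}\nm{\varphi_k}_{L^2(0,T)}$ and $\sum_k\nm{f_k}_{L^2(0,T)}\nm{\varphi_k}_{L^2(0,T)}$ all finite. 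Substituting $\D_{0+}^\alpha(c_k-c_k(0))=\lambda_k c_k-f_k$ collapses the display to $\sum_k(f_k,\varphi_k)_{L^2(0,T)}=(f,\varphi)_{L^2(Q_T)}$, which is exactly \cref{eq:weak-sol}.

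The main obstacle is not any single sharp estimate but the careful bookkeeping of these limiting arguments: one must check that every series converges in the topology needed to interchange $\sum_k$ with both the time integration and the spatial $L^2$‑pairing, and that the eigenfunction expansion of $\partial_t^\alpha(u-u_0)$ is legitimate — all of which is powered by the a priori bounds in \cref{thm:ode-1,thm:esti-u}. A minor pitfall is that a generic $\varphi\in L^2(0,T;H_0^1(\Omega))$ has $\varphi_k$ only in $L^2(0,T)$, not in $H^{\alpha/2}(0,T)$, so one must not attempt to use the pairing identity \cref{eq:Dalpha-2} on the test slot and should instead pair $\D_{0+}^\alpha(c_k-c_k(0))\in L^2(0,T)$ directly and eliminate it through the fractional ODE, as above.
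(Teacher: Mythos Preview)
Your approach is essentially the paper's: establish $u\in H^\alpha(0,T;L^2(\Omega))$ from \cref{thm:esti-u}, use the scalar embedding $H^\alpha(0,T)\hookrightarrow C[0,T]$ componentwise to get continuity and the initial value, identify $\partial_t^\alpha(u-u_0)$ as $\sum_k\D_{0+}^\alpha(c_k-c_k(0))\phi_k$ in $L^2(Q_T)$, and then verify \cref{eq:weak-sol}. The only methodological difference is in the last step: the paper checks \cref{eq:weak-sol} on test functions of the special form $\eta\phi_k$ with $\eta\in\mathcal D(0,T)$ and then invokes density of their span in $L^2(0,T;H_0^1(\Omega))$, whereas you expand a general $\varphi$ in the eigenbasis and argue termwise. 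Both routes are short and equivalent; the density argument avoids the explicit $\ell^2$ bookkeeping you carry out, while your direct expansion makes every interchange visible.

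One concrete slip: you write $\D_{0+}^\alpha(c_k-c_k(0))=\lambda_k c_k-f_k$ (this is what the displayed definition of $c_k$ in the paper literally says, but that minus sign is a typo there); with that sign your ``collapse'' step gives $\sum_k(2\lambda_k c_k-f_k,\varphi_k)$, not $(f,\varphi)$. The equation consistent with \cref{eq:model} and with the paper's own use of it in the proof is $\D_{0+}^\alpha(c_k-c_k(0))+\lambda_k c_k=f_k$, i.e.\ $\D_{0+}^\alpha(c_k-c_k(0))=f_k-\lambda_k c_k$, and with this correction your computation goes through exactly as you describe.
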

\begin{proof}
  By \cref{thm:esti-u} we have $ u \in H^\alpha(0,T; L^2(\Omega)) $, and as $
  H^\alpha(0,T) $ is continuously embedded into $ C^{\alpha-0.5}[0,T] $,
  modifying $ u $ on a set of measure zero yields
  \begin{align*}
    {} &
    \nm{ u(t+h) - u(t) }_{L^2(\Omega)}^2 =
    \sum_{k=0}^\infty \snm{c_k(t+h) - c_k(t)}^2 \\
    \lesssim{} &
    \sum_{k=0}^\infty \snm{h}^{2\alpha-1}
    \nm{c_k}_{H^\alpha(0,T)}^2 =
    \snm{h}^{2\alpha-1} \nm{u}_{H^\alpha(0,T; L^2(\Omega))}^2,
  \end{align*}
  for all $ 0 \leqslant t \leqslant T $ and $ h $ such that $ 0 \leqslant t+h
  \leqslant T $. This implies $ u \in C([0,T]; L^2(\Omega)) $, and by the
  definition \cref{eq:u} of $ u $, it is obvious that $ u(0) = u_0 $.

  Since $ u \in H^\alpha(0,T; L^2(\Omega)) $, \cref{lem:Dalpha} implies
  \[
    \sum_{k=0}^\infty \D_{0+}^\alpha (c_k - c_k(0)) \phi_k \in L^2(Q_T),
  \]
  and it is easy to verify that
  \[
    \partial_t^\alpha(u - u_0) =
    \sum_{k=0}^\infty \D_{0+}^\alpha(c_k - c_k(0)) \phi_k
    \quad \text{ in $ L^2(Q_T) $. }
  \]
  Thus, the definitions of $ c_k $'s indicate that
  \[
    \left(
      \partial_t^\alpha(u-u_0), \eta \phi_k
    \right)_{L^2(Q_T)} +
    \left(
      \nabla u, \nabla \phi_k \eta
    \right)_{L^2(Q_T)} =
    \left(
      f, \eta\phi_k
    \right)_{L^2(Q_T)}
  \]
  for all $ \eta \in \mathcal D(0,T) $ and $ k \in \mathbb N $, and hence
  \cref{eq:weak-sol} follows from the density of
  \[
    \text{span}\Big\{
      \eta \phi_k | \
      \eta \in \mathcal D(0,T),\ k \in \mathbb N
    \Big\}
  \]
  in $ L^2(0,T; H_0^1(\Omega)) $. This
  proves the theorem.
\end{proof}
\begin{rem}
  Under the same condition as \cref{thm:u}, suppose $ \widetilde u \in
  H^\alpha(0,T; L^2(\Omega)) \cap L^2(0,T; H_0^1(\Omega)) $ with $ \widetilde
  u(0) = u_0 $ also satisfies that
  \[
    \left(
      \partial_t^\alpha (\widetilde u-u_0),
      \varphi
    \right)_{L^2(Q_T)} +
    \left( \nabla \widetilde u, \nabla \varphi \right)_{L^2(Q_T)} =
    \left( f, \varphi \right)_{L^2(Q_T)},
  \]
  for all $ \varphi \in L^2(0,T; H_0^1(\Omega)) $. Putting
  \[
    w(t) := u(t) - \widetilde u(t), \quad 0 < t < T,
  \]
  we have
  \[
    \left( \partial_t^\alpha w, w \right)_{L^2(Q_T)} +
    \nm{w}_{ L^2(0,T; H_0^1(\Omega)) } = 0.
  \]
  Since $ w \in H^\alpha(0,T; L^2(\Omega)) $ with $ w(0) = 0 $, by
  \cref{lem:Dalpha} we obtain
  \[
    \left( \partial_t^\alpha w, w \right)_{L^2(Q_T)} \sim
    \nm{w}_{ H^\frac\alpha2(0,T; L^2(\Omega)) }^2,
  \]
  so that
  \[
    \nm{w}_{ H^\frac\alpha2(0,T; L^2(\Omega)) } =
    \nm{w}_{ L^2(0,T; H_0^1(\Omega)) } = 0.
  \]
  This implies $ w = 0 $ and hence $ u = \widetilde u $. Therefore, if we call $
  u \in H^\alpha(0,T; L^2(\Omega)) \cap L^2(0,T; H_0^1(\Omega)) $ a weak
  solution of \cref{eq:model} such that $ u(0) = u_0 $, and \cref{eq:weak-sol}
  holds for all $ \varphi \in L^2(0,T; H_0^1(\Omega)) $, then \cref{eq:model}
  has a unique weak solution given by \cref{eq:u}.
\end{rem}

\begin{rem}
  By \cref{thm:esti-u}, it is expected that taking $ t^\alpha $ as a basis
  function in time may improve the algorithms developed in
  \cite{Li;Xu;2009,Zheng;2015} noticeably.
\end{rem}

\appendix
\section{Proof of \cref{lem:F}}
\label{sec:app}
Suppose that $ 0 < \beta < 1 $ and $ v \in L^1(\mathbb R) $ with support $ [0,1]
$. It is easy to verify that $ I_{+}^\beta v \in \mathcal S'(\mathbb R) $, and
so, $ \D_{+}^{1-\beta} v = \D I_{+}^\beta v \in \mathcal S'(\mathbb R) $;
therefore, both $ \mathcal F(I_{+}^\beta v) $ and $ \mathcal F(\D_{+}^{1-\beta}
v) $ make sense. Below we give an elementary proof of the following equalities:
\begin{align}
  \mathcal F( I_{+}^\beta v )(\xi) &=
  ( \mathrm{i}\xi )^{-\beta} \mathcal F v(\xi),
  \label{eq:app-1} \\
  \mathcal F( \D_{+}^{1-\beta} v )(\xi) &=
  ( \mathrm{i}\xi )^{1-\beta} \mathcal F v(\xi),
  \label{eq:app-2}
\end{align}
for all $ -\infty < \xi < \infty $.

Let us first introduce a function $ G: \mathbb R \to \mathbb C $ by
\[
  G(x) := \frac 1 { \Gamma(\beta) }
  \int_{\gamma_x} \E^{-z} z^{\beta-1} \, \mathrm{d}z,
  \quad -\infty < x < \infty,
\]
where $ \gamma_x $ denotes the directed smooth curve $ z(t) := \mathrm{i}tx,\ 0
< t < 1 $, in the complex plane. It is easy to see that $ G $ is continuous, and
by a trivial modification of Jordan's Lemma in complex analysis we obtain
\[
  \lim_{x \to \infty} G(x) = \lim_{x \to -\infty} G(x) =
  \frac1{\Gamma(\beta)} \int_0^\infty \E^{-t} t^{\beta-1} \, \mathrm{d}t = 1.
\]
Therefore, $ \sup_{x \in \mathbb R} \snm{G(x)} < \infty $.

Given $ \varphi \in \mathcal S(\mathbb R) $, since $ \mathcal F \varphi \in
\mathcal S(\mathbb R) $, it is obvious that
\[
  \int_0^\infty \snm{\mathcal F\varphi(x)}
  \int_0^x (x-t)^{\beta-1} \snm{v(t)} \, \mathrm{d}t \, \mathrm{d}x
  < \infty,
\]
and so
\begin{align*}
  {} &
  \int_{\mathbb R} I_{+}^\beta v(x) \mathcal F\varphi(x)  \, \mathrm{d}x \\
  ={} &
  \frac1{\Gamma(\beta)} \int_0^\infty \mathcal F\varphi(x) \int_0^x(x-t)^{\beta-1}
  v(t) \, \mathrm{d}t \, \mathrm{d}x \\
  ={} &
  \lim_{n \to \infty} \frac1{\Gamma(\beta)} \int_0^n \mathcal F \varphi(x)
  \int_0^x (x-t)^{\beta-1} v(t) \, \mathrm{d}t \, \mathrm{d}x,
\end{align*}
by the Lebesgue's dominated convergence theorem. For $ n \in \mathbb N_{>0} $,
since
\[
  \mathcal F\varphi(x) = \frac1{\sqrt{2\pi}}
  \int_{\mathbb R} \E^{-\mathrm{i}x\xi} \varphi(\xi) \, \mathrm{d}\xi,
  \quad -\infty < x < \infty,
\]
a straightforward computing gives
\[
  \frac1{\Gamma(\beta)} \int_0^n \mathcal F \varphi(x)
  \int_0^x (x-t)^{\beta-1} v(t) \, \mathrm{d}t =
  \int_{\mathbb R} h_n(\xi) \, \mathrm{d}\xi,
\]
where
\[
  h_n(\xi) =
  ( \mathrm{i} \xi )^{-\beta} \varphi(\xi) \times
  \frac1{\sqrt{2\pi}} \int_0^1
  \E^{ -\mathrm{i}t\xi }\ v(t)\ G\big( (n-t)\xi \big)
  \, \mathrm{d}t.
\]
Therefore,
\[
  \int_{\mathbb R}
  I_{+}^\beta v(x) \mathcal F\varphi(x)
  \, \mathrm{d}x =
  \lim_{n \to \infty} \int_{\mathbb R} h_n(\xi) \, \mathrm{d}\xi.
\]
Putting
\[
  h(\xi) := (2\pi)^{-\frac 12} \snm{\xi}^{-\beta}
  \snm{ \varphi(\xi) } \nm{v}_{L^1(0,1)}
  \nm{G}_{ L^\infty(\mathbb R) },
  \quad \xi \in \mathbb R \setminus \{ 0 \},
\]
we have
\[
  \snm{h_n(\xi)} \leqslant h(\xi) \quad
  \text{ for all $ \xi \in \mathbb R \setminus \{0\} $, }
\]
and it is clear that $ h \in L^1(\mathbb R) $ since $ \varphi \in \mathcal
S(\mathbb R) $. Also, given $ \xi \in \mathbb R \setminus \{0\} $, since $
G((n-t)\xi) \to 1 $ uniformly for all $ t \in [0,1] $ as $ n $ tends to
infinity, we deduce that

\[
  h_n(\xi) \to (\mathrm{i}\xi)^{-\beta} \varphi(\xi) \mathcal F v(\xi)
  \quad \text{ as $ n \to \infty $, }
\]
As a consequence, the Lebesgue's dominated convergence theorem implies
\[
  \lim_{n \to \infty} \int_{\mathbb R} h_n(\xi) \, \mathrm{d}\xi =
  \int_{\mathbb R} (\mathrm{i}\xi)^{-\beta} \mathcal Fv(\xi) \varphi(\xi) \,
  \mathrm{d}\xi,
\]
therefore,
\[
  \int_{\mathbb R} I_{0+}^\beta v(x) \mathcal F \varphi(x) \, \mathrm{d}x =
  \int_{\mathbb R} (\mathrm{i}\xi)^{-\beta} \mathcal Fv(\xi) \varphi(\xi) \,
  \mathrm{d}\xi.
\]
Since $ \varphi \in \mathcal S(\mathbb R) $ is arbitrary, this proves
\cref{eq:app-1}, and then \cref{eq:app-2} follows from the following
standard result:
\[
  \mathcal F(\D_{+} I_{+}^\beta v)(\xi) =
  \mathrm{i} \xi \mathcal F(I_{+}^\beta v)(\xi),
  \quad -\infty < \xi < \infty.
\]
\begin{rem}
  A trivial modification of the above analysis yields that
  \begin{align*}
    \mathcal F( I_{-}^\beta v )(\xi) &=
    ( -\mathrm{i} \xi )^{-\beta} \mathcal Fv (\xi), \\
    \mathcal F( \D_{-}^{1-\beta} v )(\xi) &=
    ( -\mathrm{i} \xi )^{1-\beta} \mathcal Fv (\xi),
  \end{align*}
  for all $ -\infty < \xi < \infty $.
\end{rem}


\end{document}